\title[Variational Hodge conjecture]{Variational Hodge conjecture for complete intersections on hypersurfaces in projective space}
\author[R. Kloosterman]{Remke Kloosterman}
\email{klooster@math.unipd.it}
\address{Universit\`a degli Studi di Padova,
Dipartimento di Matematica ``Tullio Leci-Civita'',
Via Trieste 63,
35121 Padova, Italy}
\theoremstyle{plain}
\newtheorem{theorem}{Theorem}[section]
\newtheorem{lemma}[theorem]{Lemma}
\newtheorem{proposition}[theorem]{Proposition}
\theoremstyle{definition}
\newtheorem{definition}[theorem]{Definition}
\newtheorem{notation}[theorem]{Notation}
\theoremstyle{remark}
\newtheorem{remark} [theorem]{Remark}
\newtheorem{example}[theorem]{Example}
\DeclareMathOperator{\prim}{prim}
\DeclareMathOperator{\NL}{NL}
\DeclareMathOperator{\Ima}{Im}
\DeclareMathOperator{\Ext}{Ext}
\begin{document}

\begin{abstract}
In this paper we give a new and simplified proof of the variational Hodge conjecture for complete intersection cycles on a hypersurface in  projective space, avoiding the use of the semiregularity map.
\end{abstract}

\thanks{The author would like to thank Johan Aise de Jong, Hossein Movasati, Carlos Simpson, Orsola Tommasi and the referee for several remarks on a previous version of this paper.}

\maketitle
\section{Introduction}

The variational Hodge conjecture (see \cite{GroVarHod}) predicts very roughly that for a smooth $n$-dimensional projective complex variety $Y$ and an $m$-dimensional subvariety $Z\subset Y$ one can find $m$-dimensional subvarieties $Z_1,\dots Z_t$ of $Y$ and integers $n_1,\dots,n_t$ such that $[Z]=\sum n_i [Z_i] $ in $H^{2n-2m}(Y,\mathbf{Z})$ and such that the locus $\NL([Z])$ of deformations of $Y$ where the class $[Z] \in H^{2n-2m}(Y,\mathbf{Z})$ remains of type $(n-m,n-m)$ equals  the locus of deformations of $Y$ that can be lifted to  deformations of the pairs $(Y,Z_i)$ for $i=1,\dots,m$. For a precise definition of the Hodge locus see Section~\ref{secGor}.

The variational Hodge conjecture is known to hold for semiregular subvarieties $Z$ (see e.g., \cite{BF}), but it seems quite hard to check whether a given subvariety $Z$ is semiregular.  

More results are known if $Y$ is a smooth hypersurface in $\mathbf{P}^{n+1}$ of degree $e$. Then the variational Hodge conjecture holds trivially true for all $m$ different from $n/2$. Suppose now that $n$ is even and $m$ equals $n/2$. 
In this paper we consider the case where $Z$ is a complete intersection in $\mathbf{P}^{2m+1}$.
In this case $Z$ is semiregular in $Y$ by \cite[Theorem 3.1]{SteHod} and therefore the variational Hodge conjecture holds for $[Z]$.
Otwinowska \cite{OtwBig} proved the variational Hodge conjecture for $m$-dimensional subvarieties of small degree contained in $Y$.
However, Otwinowska's bounds on the degree of the subvarieties are insufficient to include all complete intersections cycles (cycles of the form $[Z]$ with $Z$ a complete intersection in $\mathbf{P}^{n+1}$). Therefore this case is not completely covered by Otwinowska.



In \cite{DanVHC}, Dan reproved the variational Hodge conjecture for complete intersection cycles contained in smooth projective hypersurfaces. Recently,  Movasati  and Villaflor-Loyola \cite{MV}  presented a different  strategy to check the variational Hodge conjecture and applied this strategy for certain classes of complete intersection cycles. However, for $n>6$ or $n=2,4$ and the degree of $Y$ is large, they only consider the case  where $Z$ is a linear subspace.

In this paper, we present a different proof for the variational Hodge conjecture for the case of complete intersection cycles contained in hypersurfaces in $\mathbf{P}^{2m+1}$.   This proof follows largely Dan's proof. Both proofs focus on giving an upper bound for the dimension of the tangent space of $\NL([Z]_{\prim})$. For this we use a simple combinatorial argument, where Dan uses a more involved description in terms of commutative algebra.  Our approach seems to be less involved, but Dan's approach seems to have more potential to be generalized.

More precisely, we prove:
\begin{theorem} Let $Y\subset \mathbf{P}^{2k+1}$ be a smooth hypersurface of degree $e\geq 3$, containing a $k$-di\-men\-si\-on\-al subvariety $Z$ which is a complete intersection in $\mathbf{P}^{2k+1}$ of multidegree $(d_1,\dots,d_{k+1})$, such that $d_i<e$ for $i=1,\dots,k+1$.

Then the Hodge locus $\NL([Z]_{\prim})$ is smooth at $Y$ and 
is contained in the locus in $\mathbf{C}[x_0,\dots,x_{2k+1}]_e$ of smooth  hypersurfaces of degree $e$ containing a complete intersection of multidegree $(d_1,\dots,d_{k+1})$.
\end{theorem}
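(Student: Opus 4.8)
Let $V\subseteq\mathbf{C}[x_0,\dots,x_{2k+1}]_e$ be the locus of smooth forms of degree $e$ whose zero locus contains a complete intersection $Z'$ of multidegree $(d_1,\dots,d_{k+1})$ with $[Z']=[Z]$; this is contained in the locus occurring in the statement, so it is enough to prove that near $Y$ one has $\NL([Z]_{\prim})=V$, and that this is smooth. The incidence variety parametrising pairs $(f',Z')$ with $Z'\subset V(f')$ a complete intersection of multidegree $(d_1,\dots,d_{k+1})$ with $[Z']=[Z]$ carries the universal (flat) family of such $Z'$, whose relative cohomology class is the flat transport of $[Z]$ and hence a Hodge class; projecting to $\mathbf{C}[x_0,\dots,x_{2k+1}]_e$ gives $V\subseteq\NL([Z]_{\prim})$ near $Y$, whence
\[
\dim_Y V\ \le\ \dim_Y\NL([Z]_{\prim})\ \le\ \dim T_Y\NL([Z]_{\prim}).
\]
The plan is to prove the reverse inequality $\dim T_Y\NL([Z]_{\prim})\le\dim_Y V$. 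Then all four numbers agree, so $\NL([Z]_{\prim})$ is smooth at $Y$, hence locally irreducible, and since it contains the closed set $V$ of the same dimension it equals $V$ near $Y$; combined with $\NL([Z])=\NL([Z]_{\prim})$ (because $h^k$ is always of type $(k,k)$), this is exactly the theorem.

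\textbf{The two tangent spaces.} Write $S=\mathbf{C}[x_0,\dots,x_{2k+1}]$, let $f\in S_e$ define $Y$, $J_f=(\partial f/\partial x_0,\dots,\partial f/\partial x_{2k+1})$, $R=S/J_f$, and put $\delta=(k+1)(e-2)$, so that $R$ is Artinian Gorenstein with socle in degree $2\delta$. Fix a presentation $f=\sum_{i=1}^{k+1}g_if_i$ with $Z=V(g_1,\dots,g_{k+1})$, $\deg g_i=d_i$ and $\deg f_i=e-d_i>0$ (this uses $d_i<e$); set $I_Z=(g_1,\dots,g_{k+1})$, $I_W=(f_1,\dots,f_{k+1})$, $W=V(I_W)$, and note that $I_Z+I_W$ does not depend on the presentation. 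As $Y$ is smooth and $f=\sum g_if_i$, no point can satisfy $g_1=\dots=g_{k+1}=f_1=\dots=f_{k+1}=0$ (there $df$ would vanish), i.e.\ $Z\cap W=\emptyset$ in $\mathbf{P}^{2k+1}$; hence $g_1,\dots,g_{k+1},f_1,\dots,f_{k+1}$ is a regular sequence of $2k+2$ forms in $2k+2$ variables, and $A:=S/(I_Z+I_W)$ is an Artinian complete intersection, Gorenstein with socle in degree $\sum_i d_i+\sum_i(e-d_i)-(2k+2)=\delta$. By the Griffiths--Carlson description of the infinitesimal variation of Hodge structure of a hypersurface, deformations of $Y$ inside $\mathbf{C}[x_0,\dots,x_{2k+1}]_e$ act on $H^{k,k}_{\prim}(Y)\cong R_\delta$ and $H^{k-1,k+1}_{\prim}(Y)\cong R_{(k+2)e-2k-2}$ by multiplication in $R$; so, writing $\bar\lambda\in R_\delta$ for the class of $[Z]_{\prim}$, the tangent space $T_Y\NL([Z]_{\prim})$ is the preimage in $\mathbf{C}[x]_e$ of $\operatorname{Ann}_R(\bar\lambda)_e$, and $\dim T_Y\NL([Z]_{\prim})=\dim(J_f)_e+\dim\operatorname{Ann}_R(\bar\lambda)_e$. (For $k=1$ the group $H^1(Y,T_Y)$ can be bigger than $R_e$, but only deformations inside $\mathbf{C}[x]_e$ are relevant and there the equations of $\NL([Z]_{\prim})$ still read $\bar g\bar\lambda=0$ in $R$.) On the other hand, perturbing the $g_i$ and the $f_i$ produces a family of members of $V$ through $Y$ whose differential at $Y$ has image $(I_Z)_e+(I_W)_e$, so $\dim_Y V\ge\dim\big((I_Z)_e+(I_W)_e\big)$; and since $\partial f/\partial x_j=\sum_i(\partial g_i/\partial x_j)f_i+\sum_i g_i(\partial f_i/\partial x_j)\in I_Z+I_W$ and $I_Z+I_W$ is an ideal, $(J_f)_e\subseteq(I_Z)_e+(I_W)_e$. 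Thus the reverse inequality reduces to the algebraic statement
\[
\operatorname{Ann}_R(\bar\lambda)_e\ \subseteq\ \overline{(I_Z)_e+(I_W)_e}\qquad\text{in }R_e.\qquad(\ast)
\]

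\textbf{The algebraic core.} The key input is the formula for the cohomology class of a complete intersection cycle in the Jacobian ring (residue calculus of Carlson--Griffiths; see also \cite{MV} and \cite{DanVHC}): $\bar\lambda\neq0$, and $\bar\lambda$ is killed in $R$ by the images of $g_1,\dots,g_{k+1}$ and of $f_1,\dots,f_{k+1}$ — in other words $\operatorname{Ann}_R(\bar\lambda)\supseteq\overline{I_Z+I_W}$. Thus there is a surjection of graded algebras $A=R/\overline{I_Z+I_W}\longrightarrow R/\operatorname{Ann}_R(\bar\lambda)=:A_{\bar\lambda}$. Both are Artinian Gorenstein with socle in degree $\delta$ — $A$ because $g_1,\dots,g_{k+1},f_1,\dots,f_{k+1}$ is a regular sequence, $A_{\bar\lambda}$ because $\bar\lambda\in R_\delta$ and $R$ is Gorenstein with socle in degree $2\delta$ — and a surjection of Artinian Gorenstein algebras with the same socle degree is an isomorphism (a nonzero kernel meets the one-dimensional socle $A_\delta$, yet $A_\delta\to(A_{\bar\lambda})_\delta$ is an isomorphism). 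Hence $\operatorname{Ann}_R(\bar\lambda)=\overline{I_Z+I_W}$, which gives $(\ast)$, indeed with equality in every degree. Therefore $\dim T_Y\NL([Z]_{\prim})=\dim(J_f)_e+\dim\overline{(I_Z)_e+(I_W)_e}=\dim\big((I_Z)_e+(I_W)_e\big)\le\dim_Y V$, so all four dimensions coincide and the theorem follows as in the first paragraph.

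\textbf{Where the difficulty lies.} Once $\bar\lambda$ is identified with the socle generator of $S/(I_Z+I_W)$, everything else is formal; the real content — and the step I expect to be the main obstacle — is that identification, equivalently the vanishings $\bar\lambda\cdot\overline{g_i}=\bar\lambda\cdot\overline{f_i}=0$ in $R$. This is where a combinatorial argument enters: using the Koszul resolution of the regular sequence $g_1,\dots,g_{k+1},f_1,\dots,f_{k+1}$ one computes the period functional $\bar Q\mapsto\int_Z\operatorname{res}\!\big(Q\,\Omega/f^{k+1}\big)\big|_Z$ on $R_\delta$ and recognises it, via Gorenstein duality in $R$, as pairing against the socle generator of $A$, the whole bookkeeping being governed only by the multidegrees $(d_1,\dots,d_{k+1})$ and $(e-d_1,\dots,e-d_{k+1})$, and the hypothesis $d_i<e$ being used to keep the $f_i$ of positive degree (so that $W$ is a genuine complete intersection). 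One routine technical point remains: making the perturbation that computes $\dim_Y V$ honest, i.e.\ that the perturbed $g_i$ still cut out a complete intersection and the perturbed form is still smooth, which are open conditions near the basepoint.
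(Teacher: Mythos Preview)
Your proposal is correct and shares with the paper its central step: both $S/(I_Z+I_W)$ and $R/\operatorname{Ann}_R(\bar\lambda)$ are Artinian Gorenstein with the same socle degree $(k+1)(e-2)$, so the surjection between them is an isomorphism, whence $T_Y\NL([Z]_{\prim})=(I_Z+I_W)_e$. The two arguments diverge in the steps surrounding this core, and in opposite directions. For the inclusion $\overline{I_Z+I_W}\subset\operatorname{Ann}_R(\bar\lambda)$ you import the Carlson--Griffiths/Villaflor residue description of $\bar\lambda$; the paper instead argues elementarily: the linear space $I(Z)_e$ lies inside $\NL(\gamma)$ (every $F'\in I(Z)_e$ defines a hypersurface containing $Z$), hence inside $T_Y\NL(\gamma)\subset I_e$, giving $P_i\in I$; and swapping one $P_i$ for $Q_i$ produces a residual complete intersection $Z'$ with $[Z']_{\prim}=-[Z]_{\prim}$, so the same Gorenstein ideal, giving $Q_i\in I$ --- no period computation needed. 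Conversely, to match $\dim L$ with $\dim T_Y\NL$, the paper spends Sections~2--3 on a Koszul combinatorics and flag-Hilbert-scheme computation showing $\operatorname{codim} L(d_1,\dots,d_{k+1};e)=h_I(e)$ exactly, after first reducing to $d_i\le e/2$; your one-line perturbation $f\mapsto\sum(g_i+g_i')(f_i+f_i')$ already gives $\dim_Y V\ge\dim\bigl((I_Z)_e+(I_W)_e\bigr)$, which is all that is needed and works without that reduction. So your route is shorter on the dimension side but less self-contained on the cycle-class side; the paper's trade-off is the reverse.
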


Note that the smoothness assumption implies that $d_i\leq e$ for each $i$. If $d_i=e$ then one can take $F$ as one of the generators of $I(Z)$. In that case ${Z}_{\prim}=0$ and $\NL([Z]_{\prim})$ is the locus of smooth hypersurfaces of degree $e$. If $e=2$ then the only possibility for the $d_i$ is to equal 1. This is the case of $2k$-dimensional quadrics. Such a quadric contains a $k$-dimensional linear space. Hence both the Hodge locus as the locus of quadrics containing a smooth $k$-dimensional linear space coincide with the locus of smooth quadrics.

To prove the theorem we note first that if $Y$ contains a complete intersection  $Z$ of multidegree $(d_1,\dots,d_{k+1})$ then it contains a residual complete intersection $\tilde{Z}$ of multidegree $(d_1,\dots,d_{i-1},e-d_i,d_{i+1},\dots, d_{k+1})$. Deformations of the pair $(Y,Z)$  correspond to deformations of the pair $(Y,\tilde{Z})$. 
 The corresponding cycle classes in primitive cohomology differ only by a sign, hence the corresponding Hodge loci  coincide (see Section~\ref{secGor}.) So without loss of generality we may assume  $d_i\leq e/2$ for all $i$.

The first part of the proof of the main theorem holds in much more generality. Let $(X,\mathcal{O}(1))$ be a projective variety of dimension $2k+1$, $S$ be its projective coordinate ring, $Y\in |\mathcal{O}(e)|$ be a smooth hypersurface and $Z$ be a complete intersection of dimension $k$ and multidegree $(d_1,\dots,d_{k+1})$, contained in $Y$. 
The first step of the proof is to give an expression for the dimension  of the so-called ``flag Hilbert scheme" parametrizing flags $(Y,Z)$, consisting of a degree $e$ hypersurface containing a complete intersection $Z$ of multidegree $(d_1,\dots,d_{k+1})$.  We use this to calculate the dimension of the locus $L$ in $S_e$  corresponding to hypersurfaces of degree $e$ containing a complete intersection of multidegree $(d_1,\dots,d_{k+1})$. (See Proposition~\ref{propHSdim}.)

Suppose now that $Z$ is given by $P_1=\dots=P_{k+1}=0$ and $Y$ is given by $\sum_{i=1}^{k+1} P_iQ_i=0$.
Consider the preimage of the Hodge locus $\NL([Z]_{\prim})\subset \mathbf{P}(S_e)$ in $S_e$. 
We use  the above dimension calculation to show that    $\NL([Z]_{\prim})$ is smooth at $Y$ and of  dimension equal to $\dim L$ if and only if the tangent space to $\NL([Z]_{\prim})$ at $Y$ inside $S_e$ equals the degree $e$ part of the ideal \[(P_1,\dots,P_{k+1},Q_1,\dots,Q_{k+1}).\]

The final part of the proof is to show that the latter condition holds in the case $X=\mathbf{P}^{2k+1}$. For this we construct a  certain Artinian Gorenstein algebra $\mathbf{C}[x_0,\dots,x_{2k+1}]/I$ such that $I_e$ is the tangent space to the Hodge locus. We show that in the case of a complete intersection cycle the ideal $I$ is generated by $P_1,\dots,P_{k+1},Q_1,\dots,Q_{k+1}$. 
This Artinian Gorenstein algebra plays a crucial role in many papers on the Noether-Lefschetz loci and Hodge loci of hypersurfaces in $\mathbf{P}^{2k+1}$.

The structure of the paper is as follows:
In Section~\ref{secCIIdeals} we discuss the Hilbert function of a complete intersection ideal and give an expression for it. In Section~\ref{secHS} we study the flag Hilbert scheme of hypersurfaces of dimension $2k$ and degree $e$  containing a complete intersection of dimension $k$. We obtain an expression for the dimension of this Hilbert scheme in terms of the Hilbert function of certain complete intersection ideals. In Section~\ref{secGor} we study the tangent space to the Hodge locus by constructing an Artinian Gorenstein algebra and obtain an expression for the dimension of the tangent space. From this we deduce the variational Hodge conjecture for complete intersection cycles on hypersurfaces in $\mathbf{P}^{2k+1}$.

\section{Complete intersection ideals}\label{secCIIdeals}
Let $X$ be  a projective variety and let $\mathcal{O}(1)$ be a very ample line bundle on $X$. Consider the embedding of $X$ into $\mathbf{P}^N$, with $N=h^0(\mathcal{O}(1))-1$. Let $I(X)$ be the homogeneous ideal of $X$ and let $S(X)=\mathbf{C}[X_0,\dots,X_N]/I(X)$ be the projective coordinate ring of $X$, which is a graded ring. 
Let $\chi: \mathbf{Z} \to \mathbf{Z}$ be the Hilbert function of $X$ with respect to $\mathcal{O}(1)$, defined by
\[ \chi(m)=\dim S(X)_m.\]
For any homogeneous ideal $I$ of $S(X)$ define the Hilbert function $h_I: \mathbf{Z}_{\geq 0} \to \mathbf{Z}_{\geq 0}$ of $I$ by
\[ h_I(m)=\dim (S(X)/I)_m.\]
Fix a complete intersection ideal  $I=(P_1,\dots,P_t)$, i.e., $P_1,\dots,P_t$ form a regular sequence of homogeneous elements of $S$. Let $d_i:=\deg(P_i)$. Assume that the $P_i$ are numbered such that  $1\leq d_1\leq d_2\leq\dots\leq d_t$. 

Consider now the Koszul complex associated with the map $\oplus S(-d_i) \to S$ given by $(P_1,\dots,P_t)$. 
\begin{equation}\label{eqnHilb} 0 \to \oplus_{j=1}^{k_t} S(-a_{tj}) 
\to \dots \to \oplus_{j=1}^{k_1} S(-a_{1j}) \to S \to S/I \to0\end{equation}
This complex is exact since $(P_1,\dots,P_t)$ is a regular sequence. Therefore we can use this complex to express the Hilbert function of the complete intersection $Z=Z(P_1,\dots,P_t)$ in terms of the Hilbert function of $X$.

First, the $a_{ij}$ can be determined as follows:
 For fixed $i$ we find that the multiset $\{a_{ij}\}$ coincides with the multiset $\{d_{k_1}+d_{k_2}+\dots+d_{k_i}\}$
with $0<k_1<k_2<\dots<k_i\leq t$. Equivalently, the $a_{ij}$ are the degrees of the products of the elements of subsets of $i$ distinct elements in $\{P_1,\dots,P_t\}$.

By the above mentioned exact sequence we now find that for $m\geq 0$.
 \begin{equation}\label{eqnDim} h_I(m)=\chi(m)+\sum_{i=1}^t (-1)^i\sum_{j=1}^{k_i} \chi(m-a_{ij}).\end{equation}
For $m'<0$ we have $\chi(m')=0$ . Hence in the above sum we are allowed to leave out any $a_{ij}$ which is at least $m+1$.

In the sequel, we want to concentrate on a special case and consider the difference between the Hilbert functions of two very similar ideals. For this we assume that $X$ has odd dimension, i.e., $\dim X=2k+1$.

\begin{notation}
Fix  $k+1$ homogeneous polynomials $P_1,\dots,P_{k+1}$, which form a regular sequence 
such that $0<d_1\leq d_2\leq\dots\leq d_{k+1}$. Let $I'$ be the ideal generated by $P_1,\dots,P_{k+1}$.
Fix a further integer $e\geq 2d_{k+1}$, and $k+1$ homogeneous polynomials $Q_1,\dots, Q_{k+1}$ such that $\deg(Q_i)=e-\deg(P_i)$ and $P_1,\dots,P_{k+1},Q_{k+1},\dots,Q_1$ forms a regular sequence in $S$. Let $I=(P_1,\dots,P_{k+1},Q_1,\dots,Q_{k+1})$.
\end{notation}

The key ingredient of our proof of the variational Hodge conjecture is a formula for  $h_I(e)-h_{I'}(e)$. To this aim, we introduce the following notation.

\begin{notation}\label{notDegs} If $e$ is odd let $a=a(I')=0$, if $e$ is even let $a=a(I')=\# \{i \mid d_i=e/2\}$.
Set $c=c(I')=\frac{1}{2}a(a-1)$. 

For any two integers $i,j\in \{1,\dots, k+1\}$, let us define
\[ A(i;j) =\left \{(k_1,\dots,k_i)\mid 0<k_1<k_2<\dots<k_i\leq k+1; d_j\geq \sum_{m=1}^i d_{k_m} \right\}\]
For consistency reasons we also introduce the set $A(0;j)=\{0\}$ and set $P_0=1$. 
\end{notation}

The numbers $a,c$ and the set $A(i;j)$ depend only on the $d_i$, but to simplify notation we write $a(I'), c(I')$ etc. when necessary.

\begin{lemma}\label{lemHilbDiff}The difference  $h_{I}(e)-h_{I'}(e)$ equals
\[ c(I')+\sum_{i=0}^{k+1}\sum_{j=1}^{k+1} \sum_{(k_1,\dots,k_i)\in A(i;j)}   (-1)^{i+1}\chi \left(d_j -d_{k_1}-d_{k_2}-\dots-d_{k_i}\right).\]
\end{lemma}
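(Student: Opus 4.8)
The plan is to compute both $h_{I'}(e)$ and $h_I(e)$ from the Hilbert-function formula~\eqref{eqnDim} applied to the respective Koszul resolutions, and then to simplify the difference. For $I'=(P_1,\dots,P_{k+1})$ the resolution has terms indexed by subsets $\{k_1<\dots<k_i\}\subseteq\{1,\dots,k+1\}$ with $a_{ij}=d_{k_1}+\dots+d_{k_i}$, so
\[
 h_{I'}(e)=\sum_{i=0}^{k+1}\sum_{0<k_1<\dots<k_i\le k+1}(-1)^i\,\chi\!\left(e-d_{k_1}-\dots-d_{k_i}\right).
\]
For $I=(P_1,\dots,P_{k+1},Q_1,\dots,Q_{k+1})$ the Koszul resolution has $2k+2$ generators; its degree shifts $a_{ij}$ are the degrees of products of $i$ distinct generators chosen among the $P$'s and $Q$'s. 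The key observation is that $\deg(Q_m)=e-d_m$, so any product involving two or more $Q$'s, or one $Q_m$ together with $P_m$, already has degree $\ge e$ in a way that makes the corresponding $\chi$-term vanish — more precisely, I will argue that the only products of degree $\le e$ that contribute are: products of $P$'s alone (degree $\le e$ automatically for the relevant subsets), and products of exactly one $Q_j$ with a collection of $P$'s whose degrees satisfy $d_{k_1}+\dots+d_{k_i}\le d_j$, i.e.\ the index set $A(i;j)$ of Notation~\ref{notDegs}, since such a product has degree $(e-d_j)+d_{k_1}+\dots+d_{k_i}\le e$.

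The main technical point — and the step I expect to be the real obstacle — is the bookkeeping for products of degree \emph{exactly} $e$ and for the interaction between $P_m$ and $Q_m$ (same index). A product $Q_j\cdot\prod_{m\in T}P_m$ has degree exactly $e$ precisely when $\sum_{m\in T}d_m=d_j$; such terms lie on the boundary of $A(i;j)$ and we must be careful whether $\chi(0)=1$ is counted. The constant $c(I')=\tfrac12 a(a-1)$ is exactly the correction that accounts for the overcounting among the indices $i$ with $d_i=e/2$ when $e$ is even: for two such indices $i\ne i'$, both $Q_i\cdot P_{i'}$ and $Q_{i'}\cdot P_i$ have degree $e$ and contribute $\chi(0)=1$, and moreover the generator $Q_i$ itself has degree $e/2=d_i$, so $Q_i$ and $P_i$ become interchangeable; disentangling which Koszul terms survive and showing the net discrepancy is $\binom{a}{2}$ is where the combinatorics is delicate. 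When $e$ is odd, no degree shift equals $e$ exactly on the relevant terms, $a=c=0$, and the formula is cleaner.

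Concretely I would proceed as follows. First, write down~\eqref{eqnDim} for $I'$ and for $I$ using the explicit description of the $a_{ij}$ as sums of distinct generator degrees. Second, in the expansion for $h_I(e)$, discard every term whose degree shift exceeds $e$ (allowed since $\chi(m')=0$ for $m'<0$), and among the surviving terms separate those involving no $Q$ (these reproduce $h_{I'}(e)$, up to the terms that now also coincide with $Q$-terms when $e$ is even) from those involving exactly one $Q_j$. Third, show that a term with two or more $Q$'s, or with $Q_m$ and $P_m$ simultaneously (for $e$ odd, or for $e$ even and $d_m<e/2$), always has degree $>e$ and so drops out; this uses $e\ge 2d_{k+1}\ge d_m+d_{m'}$ with equality only in the exceptional even case. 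Fourth, reindex the one-$Q_j$ terms: a term $(-1)^{\,|T|+1}\chi\!\big(e-(e-d_j)-\sum_{m\in T}d_m\big)=(-1)^{\,i+1}\chi\!\big(d_j-\sum d_{k_m}\big)$ survives iff $(k_1,\dots,k_i)\in A(i;j)$, giving the double sum in the statement (the $i=0$ term with $P_0=1$ accounts for $Q_j$ alone, contributing $-\chi(d_j)$). Fifth and last, carry out the exceptional even-$e$ bookkeeping to extract the leftover constant $c(I')$, arriving at the claimed identity $h_I(e)-h_{I'}(e)=c(I')+\sum_{i,j}\sum_{A(i;j)}(-1)^{i+1}\chi(d_j-d_{k_1}-\dots-d_{k_i})$.
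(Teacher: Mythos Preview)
Your approach is essentially the paper's: compute $h_I(e)$ and $h_{I'}(e)$ via the Koszul formula~\eqref{eqnDim} and observe that their difference is the signed sum over those subsets of generators that contain at least one $Q_m$. The paper packages this as $\sum_i(-1)^i\dim(M_i/M'_i)_e$, where $M'_i\subset M_i$ are the $i$-th terms of the two Koszul complexes, but the content is identical to your ``separate the no-$Q$ terms from the rest'' step.

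Two points of confusion to correct. First, your step three is wrong about the pair $\{P_m,Q_m\}$: it has degree exactly $e$, not $>e$, so it contributes $\chi(0)$. This causes no harm, because that term is already recorded in your step four via $m\in A(1;m)$ (the definition of $A(i;j)$ does not exclude $j$ from $\{k_1,\dots,k_i\}$). Just drop the claim.

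Second, and more importantly, you have misidentified the source of $c(I')$. There is no ``overcounting'' among the one-$Q$ terms $Q_iP_{i'}$ versus $Q_{i'}P_i$, and no ``interchangeability'' of $P_i$ and $Q_i$: even when $\deg P_i=\deg Q_i=e/2$ they are distinct generators in the Koszul complex, and both pairs $\{Q_i,P_{i'}\}$, $\{Q_{i'},P_i\}$ are legitimately recorded in the double sum (one in $A(1;i)$, the other in $A(1;i')$). The constant $c(I')$ comes instead from the two-$Q$ pairs $\{Q_i,Q_j\}$ with $d_i=d_j=e/2$: each such pair has degree $(e-d_i)+(e-d_j)=e$ and sits at Koszul level~$2$, contributing $(-1)^2\chi(0)=1$. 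There are $\binom{a}{2}=c(I')$ of them, and every other subset involving two or more $Q$'s has degree strictly larger than $e$ and drops out. Once you track this correctly, your step five becomes a one-line observation rather than a delicate disentangling.
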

\begin{proof}
Consider the Koszul complex associated with $(P_1,\dots,P_{k+1})$
\begin{equation} 0 \to \oplus_{j=1}^{k'_{t'}} S(-a'_{t'j}) 
\to \dots \to \oplus_{j=1}^{k_1'} S(-a'_{1j}) \to S \to S/I' \to0\end{equation}
and the Koszul complex associated with $(P_1,\dots,P_{k+1},Q_1,\dots Q_{k+1})$
\begin{equation} 0 \to \oplus_{j=1}^{k_{t}} S(-a_{tj}) 
\to \dots \to \oplus_{j=1}^{k_1} S(-a_{1j}) \to S \to S/I \to0\end{equation}

Let $M_i:= \oplus_{j=1}^{k_i} S(-a_{ij})$ and $M'_i:= \oplus_{j=1}^{k'_i} S(-a'_{ij})$, where we set $M'_i=0$ for $t'<i\leq t$. The $a'_{ij}$ are the total degrees of subsets of $i$ elements from $\{P_1,\dots,P_{k+1}\}$, the 
 $a_{ij}$ are the total degree of subset of $i$ elements from $\{P_1,\dots,P_{k+1},Q_1,\dots,Q_{k+1}\}$. In particular, $M'_i$ is a natural submodule of $M_i$. Moreover, $M_i/M'_i$ is free and is isomorphic with $\oplus_{j=1}^{k'_i-k_i} S(-b_{ij})$ where the $b_{ij}$ are the total degree of subsets of length $i$ from  $\{P_1,\dots,P_{k+1},Q_1,\dots,Q_{k+1}\}$ containing at least one $Q_m$. 
 
 From (\ref{eqnDim}) it follows that the difference $h_I(e)-h_{I'}(e)$ equals
 \[ \chi(e)+\sum_{i=1}^t (-1)^i \dim (M_i)_e -\left(\chi(e)+\sum_{i=1}^t (-1)^i \dim (M'_i)_e\right),\] which in turn equals $\sum_{i=1}^t (-1)^{i} \dim (M_i/M'_i)_e$.
 Recall that $(M_i/M'_i)_e=\oplus_{j=1}^{k'_i-k_i} S(-b_{ij})_e$. Hence
 \[ \dim (M_i/M'_i)_e=\sum_{j=1}^{k'_i-k_i} \chi(e-b_{ij}).\]
 If $b_{ij}>e$ then the contribution to the summand is zero. Since $\deg Q_m\geq e/2$ and we need to use at least one of the $Q_m$ to obtain a $b_{ij}$, we have two ways to obtain $b_{ij}\leq e$. Firstly, $b_{ij}$ can correspond to the degree of a subset  of the form$P_{k_1},
 \dots, P_{k_{i-1}}, Q_{j}$ such that the total degree $d_{k_1}+\dots d_{k_{i-1}}+e-d_{j}$ is at most $e$, i.e.,  $d_{k_1}+\dots d_{k_{i-1}}\leq d_{j}$, or $j=2$ and $b_{ij}$ corresponds to $Q_{k_1},Q_{k_2}$ with $d_{k_1}=d_{k_2}=e/2$. In the latter case the total degree equals $e$.
 
 In particular, for $i\neq 2$ we have that 
 \[ \dim (M_i/M'_i)_e= \sum_{(k_1,\dots,k_{i-1})\in A(i-1;j)}  \chi \left(d_j -d_{k_1}-d_{k_2}-\dots-d_{k_{i-1}}\right)\]
and for $i=2$
 \[ \dim (M_i/M'_i)_e=c(I')+ \sum_{(k_1,\dots,k_{i-1})\in A(i-1;j)}  \chi \left(d_j -d_{k_1}-d_{k_2}-\dots-d_{k_{i-1}}\right).\]
 Combining  this with $ h_{I}(e)-h_{I'}(e)=\sum_{i=1}^ t (-1)^{i} \dim (M'_i/M_i)_e$ yields
 \[ c(I')+\sum_{i=0}^{k+1}\sum_{j=1}^{k+1} \sum_{(k_1,\dots,k_i)\in A(i;j)}   (-1)^{i+1}\chi \left(d_j -d_{k_1}-d_{k_2}-\dots-d_{k_i}\right).\]
\end{proof}

\section{Hilbert schemes of complete intersections}\label{secHS}
Fix a positive integer $k$  and let $(X,\mathcal{O}(1))$ be a projective variety of dimension $2k+1$ with an ample line bundle $\mathcal{O}(1)$. For the rest of this section fix an integer $e>0$ and fix positive integers $1\leq d_1\leq d_2 \leq \dots \leq d_{k+1}\leq e/2$. We want to calculate the dimension of the sublocus of $|\mathcal{O}(e)|$  consisting of hypersurfaces containing a complete intersection of multidegree $(d_1,\dots,d_{k+1})$.

For $t\leq k+1$ we denote with $H(d_1,\dots,d_t)$ the  Hilbert scheme of complete intersections $Z$ of multidegree $(d_1,\dots,d_t)$ contained in $X$. As in the previous section we can associate with $i,j \in \{1,\dots, t\}$ a  set $A(i;j)$ (see Notation~\ref{notDegs}).

\begin{lemma}\label{lemHilbSchB} If $ H(d_1,\dots,d_t)$ is not empty then it is irreducible and its  dimension equals
\[
 \sum_{j=1}^t \sum_{i=0}^t (-1)^i \sum_{(k_1,\dots,k_i)\in A(i;j)}  \chi (d_j-d_{k_1}-\dots -d_{k_i}).\]
 \end{lemma}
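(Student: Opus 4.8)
The plan is to realize $H(d_1,\dots,d_t)$ as an open subset of a product of projective spaces of forms, modulo the action encoding the change of generators of the ideal, and to compute its dimension directly. Concretely, a complete intersection $Z$ of multidegree $(d_1,\dots,d_t)$ in $X$ is cut out by a regular sequence $P_1,\dots,P_t$ with $\deg P_i = d_i$, where each $P_i$ is an element of $S(X)_{d_i}$. The naive parameter space is the open subset $U \subset \prod_{i=1}^t \mathbf{P}(S(X)_{d_i})$ of tuples $([P_1],\dots,[P_t])$ forming a regular sequence; this is irreducible (it is open in a product of irreducible varieties, and nonempty by hypothesis), which gives irreducibility of $H(d_1,\dots,d_t)$. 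Two tuples define the same subscheme $Z$ precisely when they generate the same ideal; since $P_1,\dots,P_t$ is a regular sequence, a form $P_i'$ of degree $d_i$ lies in $I(Z)_{d_i} = I'_{d_i}$ iff $P_i' = \sum_{j : d_j \le d_i} \lambda_{ij} R_{ij} P_j$ for forms $R_{ij}$ of degree $d_i - d_j$ — in other words the fiber of $U \to H(d_1,\dots,d_t)$ through $(P_1,\dots,P_t)$ is an open subset of $\prod_{i=1}^t \mathbf{P}(I'_{d_i})$, where $I' = (P_1,\dots,P_t)$.

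Given this, the dimension count is purely numerical:
\[
\dim H(d_1,\dots,d_t) = \sum_{i=1}^t \bigl( \dim \mathbf{P}(S(X)_{d_i}) - \dim \mathbf{P}(I'_{d_i}) \bigr) = \sum_{i=1}^t \bigl( \chi(d_i) - \dim I'_{d_i} \bigr).
\]
Now $\dim I'_{d_i} = \chi(d_i) - h_{I'}(d_i)$, so each summand equals $h_{I'}(d_i)$, and
\[
\dim H(d_1,\dots,d_t) = \sum_{j=1}^t h_{I'}(d_j).
\]
It remains to expand $h_{I'}(d_j)$ using the Koszul resolution of $S(X)/I'$, exactly as in equation~(\ref{eqnDim}): $h_{I'}(d_j) = \chi(d_j) + \sum_{i=1}^t (-1)^i \sum_{\ell} \chi(d_j - a'_{i\ell})$, where the $a'_{i\ell}$ run over the degrees $d_{k_1} + \dots + d_{k_i}$ of $i$-element subsets of $\{P_1,\dots,P_t\}$. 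Since $\chi$ vanishes in negative degrees, only those subsets with $d_{k_1}+\dots+d_{k_i} \le d_j$ contribute — that is, exactly the tuples in $A(i;j)$ (and the $i=0$ term $\chi(d_j)$ is the contribution of $A(0;j) = \{0\}$). Substituting gives precisely the claimed formula.

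The main obstacle is the first paragraph — justifying that $U \to H(d_1,\dots,d_t)$ is a well-defined surjective morphism whose fibers are open subsets of the projective spaces $\prod_i \mathbf{P}(I'_{d_i})$, and in particular that these fibers all have the same dimension over a dense open (so that the dimension of the base is the difference of the total space and generic fiber dimensions). The key algebraic input is that for a regular sequence $P_1,\dots,P_t$ with $d_1 \le \dots \le d_t$, the degree-$d_i$ graded piece of the ideal $I'$ is spanned by the $R P_j$ with $\deg R = d_i - d_j$; this is immediate from the Koszul resolution, since the first syzygies sit in degrees $d_j + d_{j'} > d_i$ whenever they could interfere (here one uses $d_i \le e/2$ only indirectly — the relevant point is just that $I'$ is a complete intersection ideal so it has no low-degree syzygies). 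Once the fiber description is in place the rest is the routine Koszul bookkeeping sketched above. One should also remark that the formula is independent of the choice of $Z$, which is automatic since it only involves the $d_i$ and $\chi$; together with irreducibility this shows every nonempty $H(d_1,\dots,d_t)$ has the stated dimension.
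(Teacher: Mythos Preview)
Your argument is correct and reaches the same formula as the paper, but by a genuinely different route. The paper argues by induction on the number of distinct values among $d_1,\dots,d_t$: in the base case $H(d_1,\dots,d_t)$ is an open subset of a Grassmannian; in the inductive step one fibers $H(d_1,\dots,d_t)$ over $H(d_1,\dots,d_r)$ (with $r$ the largest index with $d_r<d_t$) and identifies the fibers as open subsets of Grassmannians of $(t-r)$-planes in $(S/I(Z'))_{d_t}$. Irreducibility is then deduced from properness of this projection together with irreducibility and equidimensionality of the fibers. Your approach instead parametrizes directly by the open set $U\subset\prod_i\mathbf{P}(S_{d_i})$ of regular sequences and divides out the change-of-generators fibers $\prod_i\mathbf{P}(I'_{d_i})$, landing immediately on the compact identity $\dim H(d_1,\dots,d_t)=\sum_{j=1}^t h_{I'}(d_j)$ before expanding via Koszul. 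This is more elementary and avoids the induction; the paper's fibration, on the other hand, keeps closer contact with the Hilbert-scheme structure and makes the irreducibility step cleaner (via a proper map with irreducible equidimensional fibers rather than via a surjection from $U$ that one must separately argue is a morphism).

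One small correction: your parenthetical remark that ``the first syzygies sit in degrees $d_j+d_{j'}>d_i$'' is not true in general (e.g.\ $d_1=d_2=1$, $d_3=2$), and it is also unnecessary. The statement you actually need---that the fiber over $Z$ is open in $\prod_i\mathbf{P}(I(Z)_{d_i})$---follows simply because any $P'_i\in I(Z)_{d_i}$ lies in the ideal by definition, and a tuple in $\prod_i I(Z)_{d_i}$ forming a regular sequence automatically cuts out a complete intersection of the same multidegree contained in $Z$, hence equal to $Z$. The dimension of that fiber is then $\sum_i(\dim I(Z)_{d_i}-1)$, with no need to describe $I(Z)_{d_i}$ explicitly; your subsequent computation via $\dim I'_{d_i}=\chi(d_i)-h_{I'}(d_i)$ already handles any syzygies correctly.
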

\begin{proof}
Recall that we assumed that $d_1\leq d_2\leq \dots \leq d_t$. 

We prove the claim by induction on the number of distinct values in $d_1,\dots, d_t$.

If there is one value, i.e., if $d_t=d_1$ then $H(d_1,\dots,d_t)$ is an open subset of the Grassmannian of $t$-dimensional subspaces of $S_{d_t}$, which  is irreducible of dimension
$t(\chi (d_t)-t).$
In this case $A(0;j)=\{0\}$ and it contributes $\chi(d_j)=\chi(d_t)$ to the sum, $A(1;j)=\{d_1,\dots,d_t\}$ and each element contributes $-1$ to the sum. Hence we find that the right hand side equals
\[ t(\chi(d_t)-t)\]
and we are done in this case.

If $d_t\neq d_1$ then let $r$ be the largest integer such that $d_r<d_t$. We have a natural map 
\[ H(d_1,\dots,d_t)\to H(d_1,\dots,d_{r})\]
mapping the complete intersection $Z:=V(P_1,\dots,P_t)$ to the complete intersection $Z':=V(P_1,\dots,P_r)$.

The fiber over $Z'$ is an open subset in the Grassmannian of dimension $t-r$ subspaces in $(S/I(Z'))_{d_t}$. Hence the fiber dimension equals
\[(t-r) (h_{I(Z')}(d_t)-(t-r))\]
which equals
\[ \sum_{j=r+1}^t\left( h_{I(Z')}(d_j)-(t-r)\right).\]
To compute $h_{I(Z')}(d_j)$ for $j=r+1,\dots,t$ we apply formula (\ref{eqnHilb}) and consider the resolution of the ideal  $I(Z')$.
The generators of $I(Z')$ correspond to the elements of $A(1;j)$ of degree strictly less than $d_j=d_t$, the $i-1$-th syzygies of degree at most $d_j$ correspond to the elements of $A(i;j)$ with $2\leq i \leq r$. 
Note that $A(i;j)$ is empty for $i>r$.

Hence $h_{I(Z')}(d_j)$ equals
\[ \chi(d_j)+\sum_{i=1}^r (-1)^i \sum_{(k_1,\dots,k_i)\in A(i;j)}  \chi (d_j-d_{k_1}-\dots -d_{k_i}) + (t-r)\]
 (The contribution $t-r$ is a correction for the fact that we sum also over the elements in $A(1;j)$ of degree $d_t$.)
 
By induction we now obtain that 
\[ \dim H(d_1,\dots,d_t) = \sum_{j=1}^t \sum_{i=0}^t (-1)^i \sum_{(k_1,\dots,k_i)\in A(i;j)}  \chi (d_j-d_{k_1}-\dots -d_{k_i}).\]

The irreducibility claim now follows by induction: A flag Hilbert scheme para\-met\-rizing flags of subvarieties in a projective variety is itself a projective scheme, see \cite[Theorem 4.5.1]{Ser}. This implies that the projection morphism $H(d_1,\dots,d_t)\to H(d_1,\dots,d_r)$ is proper. Moreover, we showed that the fiber dimension is constant and every fiber is irreducible. It is a straightforward exercise to show that if $f:S\to T$ is a proper morphism onto an irreducible scheme $T$, with constant fiberdimension and irreducible fibers then $S$ is irreducible.
The scheme $ H(d_1,\dots,d_r)$ is irreducible by the induction hypothesis, hence $H(d_1,\dots,d_t)$ is irreducible.
\end{proof}

Denote now by $H(d_1,\dots,d_{k+1};e)$ the flag Hilbert scheme of pairs $(Y,Z)$ consisting of complete intersections $Z$ of multidegree $(d_1,\dots,d_{k+1})$ contained in a hypersurface $Y$ in $X$ of degree $e$. The dimension of the fiber of $H(d_1,\dots,d_{k+1};e)\to H(d_1,\dots, d_{k+1})$ over $Z$ equals $\mathbf{P}(I_e(Z))$ and hence has dimension $\chi(e)-h_{I(Z)}(e)-1$. Hence $\dim H(d_1,\dots,d_{k+1};e)$ can be easily obtained from the previous lemma.

Let $H(e)=\mathbf{P}(S_e)$ be the Hilbert scheme of hypersurfaces of degree $e$ in $X$. 

\begin{proposition}\label{propHSdim} Suppose that $\dim X$ is odd. Let $k=\frac{\dim X-1}{2}$. 

The locus  $L(d_1,\dots,d_{k+1};e)$ of smooth hypersurfaces in $H(e)$ containing a complete intersection of multidegree $(d_1,\dots,d_{k+1})$ has codimension $h_I(e)$, where $I$ is a complete intersection ideal of multidegree $(d_1,d_2,\dots, d_{k+1},e-d_{k+1},\dots,e-d_1)$.
\end{proposition}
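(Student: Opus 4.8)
The plan is to realise $L=L(d_1,\dots,d_{k+1};e)$ as the image of the flag Hilbert scheme under the forgetful map and to compute its dimension by a fibre‑dimension count, the only genuinely new input being that a general fibre has dimension $c(I')$. Consider the projection
\[ \pi\colon H(d_1,\dots,d_{k+1};e)\longrightarrow H(e)=\mathbf{P}(S_e),\qquad (Y,Z)\mapsto Y, \]
and let $U\subset H(d_1,\dots,d_{k+1};e)$ be the open locus where $Y$ is smooth, so that $L=\pi(U)$. The proof of Lemma~\ref{lemHilbSchB} presents $H(d_1,\dots,d_{k+1})$ as an iterated Grassmannian bundle, hence it is smooth and irreducible; since the fibre of $H(d_1,\dots,d_{k+1};e)\to H(d_1,\dots,d_{k+1})$ over $Z$ is $\mathbf{P}(H^0(X,\mathcal{I}_Z(e)))$ and $h^0(\mathcal{I}_Z(e))=\chi(e)-h_{I(Z)}(e)$ depends only on the multidegree (by the Koszul formula~(\ref{eqnDim})), this map is a projective bundle and $H(d_1,\dots,d_{k+1};e)$ is smooth and irreducible of dimension
\[ D=\dim H(d_1,\dots,d_{k+1})+\chi(e)-h_{I'}(e)-1,\qquad I'=I(Z). \]
Assuming $U\neq\emptyset$ (true under the hypotheses of the main theorem; for arbitrary $X$ this must be part of the assumptions), $U$ is dense in $H(d_1,\dots,d_{k+1};e)$, $\overline{L}$ is irreducible, and $\dim L=D-\delta$, where $\delta$ is the dimension of a general fibre of $\pi|_U$.

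The heart of the matter is the claim $\delta=c(I')$. Fix a general pair $(Y,Z)\in U$ and write $Z=V(P_1,\dots,P_{k+1})$ and $Y=V(F)$ with $F=\sum_{i=1}^{k+1}P_iQ_i$, $\deg Q_i=e-d_i$; for a general such pair $P_1,\dots,P_{k+1},Q_1,\dots,Q_{k+1}$ is a regular sequence, equivalently $\overline{Q}_1,\dots,\overline{Q}_{k+1}$ is a regular sequence in $R:=S/I'$. A first‑order deformation of $Z$, in which $P_i$ becomes $P_i+\varepsilon R_i$ with $R_i\in S_{d_i}$, keeps $Z$ inside $Y$ exactly when $\sum_i R_iQ_i\in I'$: indeed $F=\sum_i(P_i+\varepsilon R_i)Q_i-\varepsilon\sum_i R_iQ_i$, and since $I'$ is a complete intersection every Koszul syzygy of $(P_1,\dots,P_{k+1})$ vanishes modulo $I'$, so no cancellation can be gained from the deformed generators. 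Hence the tangent space at $[Z]$ to the (scheme‑theoretic) fibre $\pi^{-1}(Y)$ is
\[ \Bigl\{(\overline{R}_1,\dots,\overline{R}_{k+1})\in\bigoplus_{i=1}^{k+1}R_{d_i}\ \Big|\ \sum_{i=1}^{k+1}\overline{R}_i\,\overline{Q}_i=0\ \text{ in }R_e\Bigr\}, \]
the degree $e$ part of the module of syzygies of the regular sequence $\overline{Q}_1,\dots,\overline{Q}_{k+1}$ in $R$. By the Koszul resolution this syzygy module is generated by the relations attached to the pairs $(\overline{Q}_i,\overline{Q}_j)$, which live in degree $(e-d_i)+(e-d_j)$, while the relations among those live in degree at least $3e/2$; since $d_i\le e/2$ for all $i$, the only generators reaching degree at most $e$ are those with $d_i=d_j=e/2$, each contributing one dimension in degree $e$, and there are $\binom{a}{2}=c(I')$ of them. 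So a general fibre of $\pi|_U$ has tangent space of dimension $c(I')$, and by generic smoothness (we are in characteristic $0$) such a fibre is smooth of dimension equal to this tangent space; hence $\delta=c(I')$.

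It remains to match $D-c(I')$ with $\dim H(e)-h_I(e)$. After replacing the sign $(-1)^{i+1}$ by $(-1)^i$, the triple sum over the sets $A(i;j)$ in Lemma~\ref{lemHilbDiff} is exactly the sum computing $\dim H(d_1,\dots,d_{k+1})$ in Lemma~\ref{lemHilbSchB}; therefore Lemma~\ref{lemHilbDiff} reads $h_I(e)-h_{I'}(e)=c(I')-\dim H(d_1,\dots,d_{k+1})$, that is, $\dim H(d_1,\dots,d_{k+1})=c(I')+h_{I'}(e)-h_I(e)$. Substituting into
\[ \dim L=D-c(I')=\dim H(d_1,\dots,d_{k+1})+\chi(e)-h_{I'}(e)-1-c(I') \]
gives $\dim L=\chi(e)-1-h_I(e)=\dim H(e)-h_I(e)$, i.e.\ $\codim_{H(e)}L=h_I(e)$, as claimed.

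The main obstacle is the fibre computation of the second paragraph, where two points deserve care. First, the genericity: one must know that $P_1,\dots,Q_{k+1}$ form a regular sequence for a general member of the irreducible flag Hilbert scheme and that, combined with upper semicontinuity of fibre dimension and generic smoothness, this suffices to determine $\delta$. (When $X=\mathbf{P}^{2k+1}$ this holds on all of $U$, since smoothness of $Y=V(\sum P_iQ_i)$ already forces $V(P_1,\dots,P_{k+1},Q_1,\dots,Q_{k+1})=\emptyset$.) Second, the identification of the fibre tangent space with a degree $e$ syzygy module and the resulting Koszul count: this is precisely where the integer $a=\#\{i\mid d_i=e/2\}$ enters, and it pins down the only positive‑dimensional families of complete intersections inside a general hypersurface --- the higher‑dimensional analogue of the two rulings of a quadric, realised here as the maximal isotropic subspaces of the split quadratic form $\sum_{d_i=e/2}P_iQ_i$ modulo the remaining $P_i$.
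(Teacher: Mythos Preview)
Your proof is correct and follows essentially the same architecture as the paper: realise $L$ as the image of the flag Hilbert scheme, compute the general fibre dimension of $\pi$ as $c(I')$ by identifying the fibre tangent space with the degree~$e$ syzygies of $\overline{Q}_1,\dots,\overline{Q}_{k+1}$ in $S/I'$ and counting via Koszul, then invoke Lemmas~\ref{lemHilbDiff} and~\ref{lemHilbSchB} to convert $D-c(I')$ into $\dim H(e)-h_I(e)$.

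The one noteworthy difference is how you pass from ``tangent space to the fibre has dimension $c$'' to ``the fibre has dimension $c$''. The paper does not appeal to generic smoothness; instead it observes that smoothness of $Y$ \emph{always} forces $P_1,\dots,P_{k+1},Q_1,\dots,Q_{k+1}$ to be a regular sequence (a common zero would be a singular point of $Y$), so the tangent bound $\le c$ holds at every point of $U$, and then it produces an explicit $c$-dimensional family in the fibre via antisymmetric matrices $M$, setting $R_j=P_{r+j}+\sum_m M_{jm}Q_{r+m}$ for the indices with $d_i=e/2$. This gives the exact fibre dimension pointwise on $U$, not just generically, and avoids the characteristic~$0$ hypothesis implicit in generic smoothness. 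Your route is cleaner to write down; the paper's is more explicit and incidentally yields the geometric picture you sketch in your final sentence (the antisymmetric matrices are precisely the isotropic rotations of the split form $\sum_{d_i=e/2}P_iQ_i$). Also note that the regular-sequence-from-smoothness argument works for any $X$, not only $\mathbf{P}^{2k+1}$, so your parenthetical caveat is unnecessarily cautious.
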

\begin{proof}
We start by determining the dimension of the general fiber of \[\pi: H(d_1,\dots,d_{k+1};e) \to \Ima(\pi)\subset H(e),\] where $\Ima(\pi)=L(d_1,\dots,d_{k+1};e)$ by definition.

The tangent space to $\pi^{-1}(Y)$ at $(Y,Z)$  is contained in $H^0(N_{Z/Y})$. The latter vector space is the kernel of $H^0(N_{Z/X})\to H^0(N_{Y/X}|_{Z})$. 

If $Z$ is given by $P_1=\dots= P_{k+1}=0$ then $Y$ is given by $\sum P_iQ_i=0$ for some proper choices of $Q_1,\dots Q_{k+1}$.
The following argument seems to be well-known but we could not find a proper reference.
Let $\mathcal{I}$ be the ideal sheaf of $Z$ in $X$. Then the final part of the Koszul resolution of $Z$ equals
\[ \oplus \mathcal{O}_X(-d_i-d_j) \to \oplus \mathcal{O}_X(-d_i) \to \mathcal{I} \to 0\]
Using that $\mathcal{I}/\mathcal{I}^2\cong \mathcal{I} \otimes \mathcal{O}_Z$, we find that after tensoring the above exact sequence with $\mathcal{O}_Z$ that 
\[ \oplus \mathcal{O}_Z(-d_i-d_j) \to \oplus \mathcal{O}_Z(-d_i) \to \mathcal{I}/\mathcal{I}^2 \to 0\]
is exact. However the first map is the zero map on $Z$, hence the normal bundle of $Z/X$ (the dual of $\mathcal{I}/\mathcal{I}^2$) is isomorphic to $\oplus \mathcal{O}_Z(d_i)$.
In particular $H^0(N_{Z/X})$ can be identified with $\oplus_{i=1}^{k+1} S/I(Z)(d_i-e)_e$. Since $N_{Y/X}=\mathcal{O}_Y(e)$ we obtain that $H^0(N_{Y/X}|_{Z})$ can be identified with $(S/I(Z))_e$. The map between $H^0(N_{Z/X})\to H^0(N_{Y/X}|_{Z})$  corresponds to multiplication by $(Q_1,\dots,Q_{k+1})$. 

We claim that  $\Sigma: P_1=\dots=P_{k+1}=Q_{k+1}=\dots =Q_1=0$ is a complete intersection. If not then $\Sigma$ would be a non-empty subscheme of $X$ along which $Y$ would be singular. Since we assumed $Y$ to be smooth, this is not the case.
In particular, $Q_1=\dots=Q_{k+1}=0$ is a complete intersection in $Y$. Hence we can study the kernel of the map $\oplus S/I(Z)(d_i-e) \to S/I(Z)$ by studying the associated Koszul complex.

However, we are only interested in the degree $e$ part. Each of the  $Q_i$ has degree at least $e/2$, hence there are few syzygies of degree at most $e$: Every second syzygy has degree bigger than $e$ and if there are first syzygies of degree at most $e$ then they have degree precisely $e$, and these syzygies correspond to unordered pairs of $\{Q_i,Q_j\}$ such that $\deg Q_i=\deg Q_j=e/2$. Hence there are $c=c(I)$ independent syzygies of this form. So we find that
\[ 0 \to \mathbf{C}^{c} \to \oplus S/I(Z)(d_i-e)_e\to S/I(Z)_e\to 0\]
is exact and $\dim H^0(N_{Z/Y})=c$. This gives an upper bound on the dimension of the tangent space to the fiber and hence an upper bound for the fiber dimension. We show now that the fiber dimension equals the upper bound on the dimension of the tangent space.

If $c=0$ then we find that the fibers are empty or finite and we are done. 

If $c>0$ then  let $a=a(I(Z))$ and let $r=k+1-a(I(Z))$ be the largest integer such that $d_r<e/2$.

Fix now an $a\times a$-antisymmetric matrix $M$ with entries from $\mathbf{C}$. 
For $i=1,\dots,a$ define polynomials $R_i=P_{r+i}+\sum_{j=1}^a M_{ij} Q_{r+j}$.
Then one easily checks that
\[ \sum_{i=1}^r Q_iP_i+\sum_{j=1}^a Q_{r+j} R_j=\sum_{i=1}^{k+1} P_iQ_i\]
holds.
I.e., the complete intersection given by $P_1=P_2=\dots=P_r=R_1=\dots=R_a$ is a complete intersection of the same multidegree as $Z$ and contained in $Y$.
Moreover, two distinct matrices yields distinct complete intersections. Hence the fiber dimension is at least $\frac{1}{2}a(a-1)=c$. Combining this with the previous upper bound, we find that the fiber dimension equals $c$.

Since the dimension of the fiber of $\pi$ equals $c$, it follows from the fiber dimension theorem that the image of $\pi$ (which is precisely $L(d_1,\dots,d_{k+1};e)\subset \mathbf{P}(S_e)$)
has dimension $ H(d_1,\dots,d_{k+1};e)-c$.  By the remark before this Proposition we know that $\dim H(d_1,\dots,d_{k+1};e)=\dim H(d_1,\dots,d_{k+1})+\chi(e)-h_{I(Z)}(e)-1$. Combining this we find 
\[\dim L(d_1,\dots,d_{k+1};e)= \dim H(d_1,\dots,d_{k+1}) + \chi(e)-h_{I(Z)}(e)-1 - c.\]
So the codimension of $ L(d_1,\dots,d_{k+1};e)$ equals
\[ h_{I(Z)}(e)+c-\dim H(d_1,\dots,d_{k+1}).\]
By Lemma~\ref{lemHilbDiff} and Lemma~\ref{lemHilbSchB}  this equals $h_I(e)$.
\end{proof}

\section{Hodge loci and the Artinian Gorenstein algebra associated with a Hodge locus}\label{secGor}

Let $k$ be a  positive integer. Let $S=\mathbf{C}[x_0,\dots,x_{2k+1}]$ be the polynomial algebra in $2k+2$ variables, with its natural grading.
Let $U\subset \mathbf{P}(\mathbf{C}[x_0,\dots,x_{2k+1}]_e)$ be the open set corresponding to smooth hypersurfaces of degree $e$.

Let $X=Z(F)$ be a smooth hypersurface of degree $e$, such that $X$ contains a non-zero Hodge class $\gamma\in H^{k,k}(X,\mathbf{C})_{\prim}\cap H^{2k}(X,\mathbf{Z})_{\prim}$. 
We will now introduce the Hodge locus inside $U$, following \cite[Section 5.3.1]{Voi2}.

Let $B\subset U$ be a simply connected neighborhood (in the analytic topology) of $[F]$. The variation of Hodge structures on $H^{2k}(X,\mathbf{Z})$ over $B$ yields a local system $H$ over $U$ and a decreasing filtration $F$, the Hodge filtration, on the holomorphic vector bundle $\mathcal{H}:=H\otimes \mathcal{O}_B$.

\begin{definition}
The class $\gamma$ induces a natural section $\lambda$ of $H$. For $b\in B$ denote with $\lambda_b$ the value of $\lambda$ in $H^{n}(X_b,\mathbf{Z})_{\prim}$. We define $\NL(\gamma)\subset B$ to be the locus $\{b\in B \mid \lambda_b \in F^{k} \mathcal{H}_b\}$. 
\end{definition}
The locus $\NL(\gamma)$ parametrizes the locus of deformations where the rational class $\gamma$ remains of type $(k,k)$. It is known that
$\NL(\gamma)$ is a complex analytic subset with a natural scheme structure. We can glue these local parts to an analytic scheme with possibly countable many irreducible components.

\begin{definition}
The Hodge locus is the union of all $\NL(\gamma)$ in $U$, where we vary both $X$ and $\gamma$.
\end{definition}
From the work of Cattano, Deligne and Kaplan \cite{DelKap} it follows that the Hodge locus is a countable union of algebraic subsets. In particular, if $L$ is an irreducible component of the Hodge loucs, then in a neighborhood $U$ of a very general point we have $L\cap U=\NL(\gamma)$ for some $\gamma$.

\begin{lemma}\label{lemRedDeg} Let $X=Z(F)$ be a smooth hypersurface of degree $e$. Suppose $Z\subset X$ is a complete intersection of multidegree $(d_1,\dots,d_{k+1})$. Then for every $i\in \{1,\dots, k+1\}$ we have that $X$ contains a complete intersection $Z'$ of multidegree $(d_1,\dots,d_{i-1},e-d_i,d_{i+1},\dots,d_{k+1})$. Moreover $\NL([Z]_{\prim})=\NL([Z']_{\prim})$  in a neighborhood of $[F]$.
\end{lemma}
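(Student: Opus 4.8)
The plan is to exhibit the residual complete intersection $Z'$ explicitly and then show that the two cycle classes, once primitivized, differ only by a sign, so that the two Hodge loci coincide as subschemes of $B$. For the first part: write $Z=V(P_1,\dots,P_{k+1})$ with $\deg P_j=d_j$. Since $Z\subset X=Z(F)$ and the $P_j$ form a regular sequence, $F$ lies in the ideal $(P_1,\dots,P_{k+1})$, so we may write $F=\sum_{j=1}^{k+1}P_jQ_j$ with $\deg Q_j=e-d_j$. Fix the index $i$. I would set $Z'=V(P_1,\dots,P_{i-1},Q_i,P_{i+1},\dots,P_{k+1})$. The first thing to check is that this sequence is again regular, equivalently that $Z'$ has the expected dimension $k$: if it did not, there would be a component of excess dimension, and along $V(P_1,\dots,P_{i-1},P_i,Q_i,P_{i+1},\dots,P_{k+1})$ — which contains $Z\cap Z'$ — the hypersurface $X$ would be singular, since all partials of $F=\sum P_jQ_j$ vanish there (each term is in the square of the ideal locally). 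This contradicts smoothness of $X$, exactly as in the proof of Proposition~\ref{propHSdim}. Hence $Z'$ is a genuine complete intersection of the asserted multidegree contained in $X$.

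Next I would identify the relation between $[Z]$ and $[Z']$ in $H^{2k}(X,\mathbf{Z})$. On the ambient $\mathbf{P}^{2k+1}$, the subscheme $V(P_1,\dots,\widehat{P_i},\dots,P_{k+1})$ is a complete intersection threefold... — more precisely, $W:=V(P_1,\dots,P_{i-1},P_{i+1},\dots,P_{k+1})$ is a $(k{+}1)$-dimensional complete intersection containing both $Z=V(P_i)\cap W$ and $Z'=V(Q_i)\cap W$ as Cartier divisors. On $W$ one has $\mathcal{O}_W(P_i)+\mathcal{O}_W(Q_i)$: the divisor $Z+Z'$ is cut out on $W$ by $P_iQ_i$, hence is linearly equivalent to $V(F)\cap W$, a hyperplane-type section of $W$ of degree $e$. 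Pushing forward to $X$ (note $W\cap X\supset Z\cup Z'$, but the relevant statement is the equality of classes $[Z]+[Z']=j^*[W\cap\{F\}]$ type relation in $H^{2k}(X)$ obtained by restricting the rational equivalence on $W$), one gets that $[Z]+[Z']$ is a rational multiple of the class $h^k$ of a linear section, i.e. lies in the span of the restriction of $H^{2k}(\mathbf{P}^{2k+1})$. Therefore in primitive cohomology $[Z']_{\prim}=-[Z]_{\prim}$.

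The last step is formal: if two integral Hodge classes $\gamma$ and $\gamma'=-\gamma$ in $H^{2k}(X,\mathbf{Z})_{\prim}$ agree up to sign, then the induced sections $\lambda,\lambda'=-\lambda$ of the local system $H$ over $B$ satisfy $\lambda_b\in F^k\mathcal{H}_b \iff \lambda'_b\in F^k\mathcal{H}_b$ for every $b$, since $F^k\mathcal{H}_b$ is a linear subspace; and the scheme structures on the two loci, being defined by the vanishing of the image of $\lambda$ (resp. $\lambda'$) in $\mathcal{H}/F^k$, are literally the same. Hence $\NL([Z]_{\prim})=\NL([Z']_{\prim})$ in a neighborhood of $[F]$.

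I expect the main obstacle to be the middle step — pinning down the exact relation $[Z]+[Z']\equiv(\text{multiple of }h^k)$ in $H^{2k}(X,\mathbf{Z})$ cleanly, including that the ambient complete intersection $W$ is itself reduced of the expected dimension (again forced by smoothness of $X$, but worth stating) and that the linear equivalence $Z+Z'\sim (F)|_W$ on $W$ restricts correctly to give an identity of cohomology classes on $X$ rather than merely on $W$. Everything else (regularity of the residual sequence, the sign bookkeeping, the equality of Hodge loci) is routine.
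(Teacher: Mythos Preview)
Your overall strategy is the same as the paper's: construct $Z'=V(P_1,\dots,P_{i-1},Q_i,P_{i+1},\dots,P_{k+1})$, show $[Z]_{\prim}=-[Z']_{\prim}$ via the auxiliary $(k{+}1)$-fold $W=V(P_1,\dots,\widehat{P_i},\dots,P_{k+1})$, and conclude by the obvious invariance of $\NL$ under sign change. Two points deserve correction.

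First, your regularity argument for $Z'$ is wrong. On $V(P_1,\dots,P_{k+1},Q_i)$ the partials of $F=\sum_j P_jQ_j$ do \emph{not} all vanish: the terms $P_jQ_j$ with $j\neq i$ lie in the ideal but not in its square, since only $P_j$ (not $Q_j$) is forced to vanish there. The clean argument, which is what Proposition~\ref{propHSdim} and Example~\ref{ExaGorensteinDefPol} actually use, is that any common zero of all $P_1,\dots,P_{k+1},Q_1,\dots,Q_{k+1}$ is a singular point of the affine cone over $X$; smoothness forces this locus to be $\{0\}$, so the full $(2k{+}2)$-tuple is a regular sequence in $S$, and then any $(k{+}1)$-element subsequence (in particular the one defining $Z'$) is again regular. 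The paper itself simply asserts that $Z'$ is a complete intersection without comment.

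Second, the middle step is simpler than you fear. There is no need to ``push a linear equivalence from $W$ to $X$''. One has literally $W\cap X=Z+Z'$ as $k$-cycles on $X$, because $F|_W=(P_iQ_i)|_W$. Hence $[Z]+[Z']=[W\cap X]=\iota^*[W]$ where $\iota\colon X\hookrightarrow\mathbf{P}^{2k+1}$, and $[W]=\bigl(\prod_{j\neq i}d_j\bigr)[H]^k$ in $H^{2k}(\mathbf{P}^{2k+1},\mathbf{Z})$. So $[Z]+[Z']$ lies in $\iota^*H^{2k}(\mathbf{P}^{2k+1})$, whence $[Z]_{\prim}+[Z']_{\prim}=0$. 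This is exactly the paper's argument (it writes $Z''$ for your $W$).
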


\begin{proof}
Suppose $Z=V(P_1,\dots,P_{k+1})$, where $P_i$ is homogeneous of degree $d_i$. From the fact that $Z$ is contained in $X$ it follows that $F=\sum_{i=1}^{k+1} P_iQ_i$ for some $Q_i$ homogeneous of degree $e-d_i$. 
In particular, $X$ contains the complete intersection $Z'=V(P_1,\dots,P_{i-1},Q_i,P_{i+1},\dots,P_{k+1})$. The latter is of multidegree  $(d_1,\dots,d_{i-1},e-d_i,d_{i+1},\dots,d_{k+1})$. 

Consider now $Z''=V(P_1,\dots,P_{i-1},P_{i+1},\dots,P_{k+1})$, the schemetheoretic union of $Z$ and $Z'$.  Since $F\equiv P_iQ_i \bmod I(Z'')$ we have  $[Z]+[Z']=[Z'']$.
At the same time $[Z'']$ equals  $\frac{\prod_{j=1}^{k+1} d_j}{d_i} [H]^{k}$, where $[H]$ is the hyperplane class in $H^{2k-2}(X,\mathbf{Z})$. From $([H]^k)_{\prim}=0$ in $H^{2k}(X,\mathbf{Q})_{\prim}$ it follows that $[Z]_{\prim}=-[Z']_{\prim}$ in  $H^{2k}(X,\mathbf{Z})_{\prim}$. Since $\NL(\gamma)$ only depends on the ray of $\gamma$ in $H^{2k}(X,\mathbf{Z})_{\prim}$ we obtain  that $\NL([Z]_{\prim})=\NL([Z']_{\prim})$.
\end{proof}

\begin{remark} Hence to prove the Variational Hodge conjecture for complete intersections it suffices to prove this under the extra assumption $e_i\leq d/2$ for all $d$.

\end{remark}

A central object in the study of Hodge loci is the Artinian Gorenstein algebra associated with it, which we now want to introduce.

Suppose $R$ is a zero-dimensional local ring. Then Grothendieck duality  yields a nondegenerate pairing
\[ R \times \Ext^0(R,\omega_R) \to \mathbf{C}\]
If $R$ is Gorenstein then by definition $\omega_R \cong R$ and we obtain an isomorphism $\Ext^0(R,\omega_R)\cong R$. In particular we have a nondegenerate pairing on $R$.

Suppose now that we have a homogeneous ideal $I=(P_0,\dots,P_{2k+1})$ in $S$,  such that its \emph{affine} zerolocus $V(P_0,\dots,P_{2k+1})$ is s zerodimensional affine scheme.
Then its affine coordinate ring $S/I$ has finitely many prime ideals.  Since $I$ is homogeneous we find that the only prime ideal is the irrelevant ideal $(x_0,\dots,x_{2k+1})$. Therefore $S/I$ is zero-dimensional local complete intersection ring. In particular $S/I$ is Gorenstein \cite[Corollary 21.19]{EisCA}.
Let $m$ be the maximum degree such that $I_m\neq S_m$ then it turns out $\dim (S/I)_m=1$ and that the pairing coming from Grothendieck duality
\[ S/I\otimes S/I\to \mathbf{C}\]
respects the grading and is (upto a scalar) induced by the multiplication map
\[ (S/I)_k \times (S/I)_{m-k} \to (S/I)_m\]
(see \cite[Section 9]{Huneke}.)

Moreover, since $S/I$ has a unique maximal ideal and is Noetherian, it follows from Hopkin's theorem that $S/I$ is Artinian. Equivalently we can introduce Artinian Gorenstein algebras as follows:

\begin{definition}
A \emph{graded Artinian Gorenstein algebra} over $\mathbf{C}$ is a  graded $\mathbf{C}$-algebra $R$  which has finite dimension as a vector space, such that there is a positive integer $m$ such that $R_i=0$ for $i>m$; $\dim R_m=1$ and for every integer $i$ with $0\leq i \leq m/2$ we have that the multiplication map
\[ R_i \times R_{m-i} \to R_m\]
is a perfect pairing.

We call $m$ the \emph{socle degree} of $R$.
\end{definition}

\begin{example}\label{ExaGorensteinDefPol} 
Let $Y\subset \mathbf{P}^{2k+1}$ be a smooth hypersurface of degree $e$. Let $F\in S_e$ be a defining polynomial for $Y$. Assume that  $Y$ contains a $k$-dimensional complete intersection $Z$ of multidegree $(d_1,\dots,d_{k+1})$, such that  $1\leq d_1\leq d_2\leq d_3\leq \dots\leq d_{k+1}\leq e/2$. Let $P_1,\dots, P_{k+1}$ be generators of the ideal of $Z$. Then $F$ is of the form 
\[ P_1Q_{1}+\dots +P_{k+1}Q_{k+1}.\]
Any common point of $P_1=\dots =P_{k+1}=Q_1=\dots=Q_{k+1}=0$ is a singular point of the affine cone of $Y$ in $\mathbf{A}^{2k+2}$. Since $Y\subset \mathbf{P}^{2k+1}$ is smooth, the only such point is the origin, hence these $2k+2$ polynomials define a scheme-theoretic complete intersection in $\mathbf{A}^{2k+2}$. In particular,
\[S/(P_1,\dots,P_{k+1},Q_1,\dots,Q_{k+1})\]
is an example of an Artinian Gorenstein algebra. Its socle degree equals the sum of the degrees of the generators minus the number of variables, which equals \[ (k+1)e-2k-2.\]
\end{example}


\begin{remark}\label{rmkQuo}
If $S/I$ and $S/J$ are  graded Artinian Gorenstein algebras and $S/J$ is a quotient of $S/I$ then  $J=(I: g)$ for some homgeneous form $g$. The difference in the socle degree equals the degree of $g$. Hence the only quotient of $S/I$ with the same socle degree is $S/I$ itself. See \cite[Theorem 21.6]{EisCA}
\end{remark}

\begin{lemma} \label{lemRecover} Let $m\geq 1$. Suppose $V\subset S_m$ is a subspace of codimension 1. Suppose $V$ is base point free.  Then there is a unique ideal $I$ such that $S/I$ is Artinian Gorenstein and $I_m=V$.
\end{lemma}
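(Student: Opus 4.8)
The statement is an existence-and-uniqueness result: given a base-point-free codimension-one subspace $V \subset S_m$, one wants a graded Artinian Gorenstein algebra $S/I$ with $I_m = V$. The natural candidate is forced by the socle-pairing structure: if $S/I$ is to be Artinian Gorenstein with socle degree $m$, then $(S/I)_m$ is one-dimensional, so $I_m$ must be a hyperplane in $S_m$, and conversely we should be able to recover $I$ in lower degrees from the requirement that the multiplication pairings $(S/I)_i \times (S/I)_{m-i} \to (S/I)_m \cong \mathbf{C}$ be perfect. Concretely, pick a linear functional $\ell : S_m \to \mathbf{C}$ with kernel exactly $V$, and define for each $i$ the subspace $I_i := \{ P \in S_i : \ell(PQ) = 0 \text{ for all } Q \in S_{m-i}\}$, with $I_i = S_i$ for $i > m$. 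The first task is to check that $I := \bigoplus_i I_i$ is an ideal: if $P \in I_i$ and $R \in S_j$, then for any $Q \in S_{m-i-j}$ we have $\ell((RP)Q) = \ell(P(RQ)) = 0$, so $RP \in I_{i+j}$. That $I$ is homogeneous is built in.

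**Gorenstein-ness of $S/I$.** Next I would verify $S/I$ is graded Artinian Gorenstein with socle degree $m$. By construction $(S/I)_i = 0$ for $i > m$ since $I_i = S_i$ there; and $(S/I)_m = S_m / V$ is one-dimensional because $V$ has codimension one. Finiteness as a vector space follows once we know $S/I$ is supported only at the irrelevant ideal, which I would get from the base-point-free hypothesis: if $V$ is base point free then the ideal generated by $V$ already cuts out a zero-dimensional scheme in $\mathbf{A}^{2k+2}$, hence so does the larger ideal $I$, so by the discussion preceding the lemma $S/I$ is an Artinian local complete intersection, in particular Gorenstein, and Hopkins' theorem gives Artinian. (Alternatively, and more in the spirit of the ``define $I$ by the pairing'' approach, one checks directly that the pairing $(S/I)_i \times (S/I)_{m-i} \to (S/I)_m$ induced by multiplication is perfect: it is nondegenerate on the left by the very definition of $I_i$, and one deduces nondegeneracy on the right by a symmetric argument, using that $\ell$ does not vanish identically on any nonzero degree-$m$ residue; then $\dim (S/I)_i = \dim (S/I)_{m-i}$ and $\bigoplus (S/I)_i$ is finite-dimensional because each graded piece embeds into the dual of another graded piece of bounded degree.) Either route shows $S/I$ is graded Artinian Gorenstein of socle degree $m$, and $I_m = \ker \ell = V$ by the $i = m$ case of the defining formula (taking $Q = 1 \in S_0$).

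**Uniqueness.** For uniqueness, suppose $S/J$ is another graded Artinian Gorenstein algebra with $J_m = V$. I would compare $J$ with the ideal $I$ built above. Since $J_m = I_m = V$ is a hyperplane, the socle degree of $S/J$ is exactly $m$ (it cannot be larger, as $J_m \ne S_m$; it cannot be smaller, else $(S/J)_m = 0$ contradicting $J_m \ne S_m$). The socle pairing on $S/J$ identifies $(S/J)_m$ with $\mathbf{C}$ via some functional $\ell'$ with $\ker \ell' = V$, hence $\ell' = \lambda \ell$ for a nonzero scalar $\lambda$, which does not affect the vanishing condition. Perfectness of the pairing $(S/J)_i \times (S/J)_{m-i} \to (S/J)_m$ then forces $J_i = \{ P \in S_i : \ell(PQ) = 0 \ \forall Q \in S_{m-i}\} = I_i$ for every $i$: the inclusion $J_i \subseteq I_i$ is immediate from $PQ \in J_m = V$, and equality follows because both subspaces have codimension $\dim(S/I)_i = \dim(S/J)_i$ in $S_i$, these dimensions being equal since both algebras are Gorenstein with the same Hilbert function forced by the perfect pairing against a common one-dimensional top piece. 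Hence $J = I$. (One could also phrase this via Remark~\ref{rmkQuo}: two Gorenstein quotients of $S$ with the same socle degree and the same degree-$m$ piece coincide.)

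**Main obstacle.** The part requiring the most care is establishing that $S/I$ is actually finite-dimensional and that the multiplication pairings are perfect in \emph{all} degrees simultaneously, rather than just nondegenerate on one side; this is exactly where the base-point-free hypothesis on $V$ is essential, and I expect the cleanest argument is the geometric one — observe that $V$ base point free makes $V(I) \subset \mathbf{A}^{2k+2}$ zero-dimensional (supported at the origin by homogeneity), invoke the local complete intersection $\Rightarrow$ Gorenstein fact cited in the excerpt, and then read off $I_m = V$ and the socle degree. The purely pairing-theoretic definition of $I$ is what makes uniqueness transparent, so I would present both: define $I$ by the pairing formula, prove it is an ideal, prove $V(I)$ is zero-dimensional from base-point-freeness, and conclude Gorenstein-ness from that.
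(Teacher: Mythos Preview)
Your construction of $I$ via the orthogonality condition $I_i = \{P \in S_i : \ell(PQ)=0 \text{ for all } Q \in S_{m-i}\}$, the verification that $I$ is an ideal, and the uniqueness argument (showing $J \subset I$ and invoking Remark~\ref{rmkQuo}) all match the paper's proof essentially verbatim; the paper writes the condition as $FS_{m-k}\subset V$ rather than via a functional, but this is the same thing.

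One genuine error: your ``geometric'' route to Gorenstein-ness is wrong. You assert that base-point-freeness makes $V(I)$ zero-dimensional and then that ``by the discussion preceding the lemma $S/I$ is an Artinian local complete intersection, in particular Gorenstein.'' But that discussion applies only to ideals generated by $2k+2$ elements forming a regular sequence; the ideal $I$ you have built is defined by an orthogonality condition and is \emph{not} in general a complete intersection, so you cannot invoke that argument. Fortunately your alternative --- verifying directly that the pairing $(S/I)_i \times (S/I)_{m-i}\to (S/I)_m$ is nondegenerate on each side, hence perfect on finite-dimensional pieces --- is exactly what the paper does and is correct. (The paper additionally cites Green's theorem to conclude that already $S/(V)$ is finite-dimensional, but as your direct argument shows, this is not strictly needed: each $(S/I)_i$ for $i\le m$ is a quotient of the finite-dimensional $S_i$, and the higher pieces vanish by decree.) So drop the complete-intersection claim and keep the pairing argument, and your proof coincides with the paper's.
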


\begin{proof}
Let $I'$ be the ideal generated by $V$.
From \cite[Theorem 2]{GreenF} it follows that $h_{I'}(m+1)=0$. In particular $S/I'$ is finite dimensional. Consider the multiplication map
\[ S_k\times S_{m-k} \to (S/I')_m\cong \mathbf{C}\]
The kernel on the right equals
\[ \{F \in S_k \mid FS_{m-k} \subset V\}\]
So for $k\leq m$ set $I_k=\{ F\in S_k \mid FS_{m-k} \subset V\}$ and $I_k=S_k$ for $k>m$. Then the pairing 
\[ (S/I)_k\times (S/I)_{m-k} \to (S/I)_m\cong \mathbf{C}\]
is perfect by construction. Moreover it is immediate that $I$ is an ideal. Any other ideal $J$ such that $V\subset J_m$ and $S/J$ is Artinian Gorenstein with socle degree $m$ satisfies $I\subset J$. By Remark~\ref{rmkQuo} it follows that  $S/I$ has no nontrivial quotient which is both Gorenstein  and has the same socle degree. Hence we find that $I=J$.
\end{proof}

\begin{proposition}
Let $Y\subset \mathbf{P}^{2k+1}$ be a smooth hypersurface of degree $e$. Let $F\in S_e$ be a defining polynomial for $Y$.
Let $J\subset S$ be the Jacobian ideal of $F$. Then $H^{k,k}(Y)_{\prim}$ can be naturally identified with $(S/J)_{(k+1)e-2k-2}$. Moreover the cup product map $H^{k,k}(Y)_{\prim}\times H^{k,k}(Y)_{\prim} \to H^{2k,2k}(Y)$ can be identified (up to a nonzero constant) with the multiplication map
\[ ( S/J)_{(k+1)e-2k-2} \times (S/J)_{(k+1)e-2k-2} \to (S/J)_{2(k+1)e-4k-4}.\]
\end{proposition}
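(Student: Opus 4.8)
The plan is to deduce both assertions from the classical residue-theoretic description of the cohomology of a smooth projective hypersurface, due to Griffiths for the Hodge pieces and to Carlson and Griffiths for the cup product; since the statement is only needed up to a nonzero scalar, the argument essentially amounts to quoting these two facts and checking that the identifications are compatible.

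First I would recall the Griffiths residue map. Write $n=2k+1$ and $\Omega=\sum_{i=0}^{n}(-1)^i x_i\,dx_0\wedge\dots\wedge\widehat{dx_i}\wedge\dots\wedge dx_n$. For a homogeneous polynomial $A$ of degree $(q+1)e-(n+1)$ the rational $n$-form $A\Omega/F^{q+1}$ defines a class in $H^{n}(\mathbf{P}^{n}\setminus Y)$, and its residue lies in $F^{\,n-1-q}H^{n-1}(Y,\mathbf{C})_{\prim}$. Griffiths' theorem (see \cite[Ch.~6]{Voi2}) asserts that the resulting map $S_{(q+1)e-(n+1)}\to F^{\,n-1-q}/F^{\,n-q}\cong H^{\,n-1-q,\,q}(Y)_{\prim}$ is surjective with kernel exactly $J_{(q+1)e-(n+1)}$. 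Specializing to $n-1=2k$ and $q=k$ gives the degree $(k+1)e-2k-2$ and the canonical isomorphism
\[ (S/J)_{(k+1)e-2k-2}\;\xrightarrow{\ \sim\ }\;H^{k,k}(Y)_{\prim}. \]
At the same time, smoothness of $Y$ makes the $\partial F/\partial x_i$ a regular sequence, so $J$ is a complete intersection ideal and $S/J$ is Artinian Gorenstein (\cite[Corollary~21.19]{EisCA}) of socle degree $(n+1)(e-2)=2(k+1)e-4k-4=2\bigl((k+1)e-2k-2\bigr)$. Hence the multiplication map in the statement indeed takes values in the one-dimensional socle $(S/J)_{2(k+1)e-4k-4}$, which will be identified with $H^{2k,2k}(Y)$ via the Gorenstein (residue/trace) isomorphism.

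It then remains to match the cup product of two such classes with the product of the corresponding polynomials, which is the Carlson--Griffiths formula: if $\alpha=\operatorname{Res}(A\Omega/F^{k+1})$ and $\beta=\operatorname{Res}(B\Omega/F^{k+1})$ with $\deg A=\deg B=(k+1)e-2k-2$, then $\alpha\cup\beta$ equals, up to a nonzero universal constant, the image of $AB\in(S/J)_{2(k+1)e-4k-4}$ under the trace isomorphism $(S/J)_{2(k+1)e-4k-4}\xrightarrow{\ \sim\ }H^{2k,2k}(Y)\cong\mathbf{C}$. I would prove this as in \cite[Ch.~6]{Voi2} and the original Carlson--Griffiths paper: represent each residue class by an explicit \v{C}ech--Dolbeault cocycle supported in a neighbourhood of $Y$ constructed from $A\Omega/F^{k+1}$, wedge the two representatives, push the result to $H^{2k}(Y,\Omega_Y^{2k})$, and evaluate it by the residue isomorphism, where it becomes visibly proportional to the class of $AB$ in the Jacobian ring; a short separate check shows the proportionality constant is independent of $A$ and $B$, so a single nonzero scalar works throughout.

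The main obstacle is the bookkeeping of constants and compatibilities rather than any new idea: one must ensure that the isomorphism $(S/J)_{(k+1)e-2k-2}\cong H^{k,k}(Y)_{\prim}$ used on both factors is (up to scalar) the residue isomorphism above, and that the target identification $(S/J)_{2(k+1)e-4k-4}\cong H^{2k,2k}(Y)$ is (up to scalar) the Gorenstein trace — this is exactly the normalization carried out in the Carlson--Griffiths proof. Since only the formulation "up to a nonzero constant" is needed in the sequel, in the write-up I would simply invoke \cite{Voi2} (and the Carlson--Griffiths paper) for both ingredients instead of reproducing the explicit cocycle computation.
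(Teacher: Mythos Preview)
Your proposal is correct and takes essentially the same approach as the paper: the paper's proof consists of exactly two sentences, one citing Griffiths \cite{GriRat} for the identification of $H^{k,k}(Y)_{\prim}$ with the Jacobian ring and one citing Carlson--Griffiths \cite{CarGri} for the compatibility with cup product. You have simply unpacked what those citations say, which is fine but more than the paper itself does.
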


\begin{proof}
The identification of the primitive part of the middle  cohomology of a smooth hypersurface is the main Theorem of \cite{GriRat}.

The fact that one can identify the cupproduct map by the multiplication map is  the main result of  \cite{CarGri} .
\end{proof}

We can use the above Proposition to associate a Artinian Gorenstein algebra with $\gamma$.

\begin{definition}\label{defGorensteinHL} Suppose that $e\geq 3$.
Let $Y$ be a smooth hypersurface in $\mathbf{P}^{2k+1}$ of degree $e$. Let $\gamma$ be a Hodge class in  $H^{k,k}(Y)_{\prim}\cap H^{2k}(Y,\mathbf{Q})$.

Consider $\gamma ^{\perp}$ in $H^{k,k}(Y)_{\prim}\cong (S/J)_{(k+1)e-2k-2}$, where the orthogonal complement is taken with respect to the cup product. 

Let $W$ be its inverse image in $S_{(k+1)e-2k-2}$. Then $W$ has codimension 1, and is obviously base point free since it contains the degree $(k+1)e-2k-2$ part of the Jacobian ideal, and the Jacobian ideal is base point free in degree $\geq e-1$. (Here we use $e>2$.)

 We can apply the construction of Lemma~\ref{lemRecover} to obtain an ideal $I$ such that $I_{(k+1)e-2k-2}=W$ and $S/I$ is Artinian Gorenstein of socle degree $(k+1)e-2k-2$. We call $S/I$ the \emph{Artinian Gorenstein algebra associated with $(Y,\gamma)$}.
\end{definition}

\begin{lemma}
Let $Y\subset \mathbf{P}^{2k+1}$ be a smooth hypersurface of degree $e$. Let $F\in S_e$ be a defining polynomial for $Y$. Assume that  $Y$ contains a $k$-dimensional complete intersection $Z$ of multidegree $(d_1,\dots,d_{k+1})$, such that  $1\leq d_1\leq d_2\leq d_3\leq \dots\leq d_{k+1}\leq e/2$. Let $P_1,\dots, P_{k+1}$ be generators of the ideal of $Z$. 
Let $Q_1,\dots,Q_{k+1}$ be further forms such that 
\[F= P_1Q_{1}+\dots +P_{k+1}Q_{k+1}.\]
Denote with $\gamma=[Z]_{\prim}\in H^{k}(Y)_{\prim}$. 

The lift to $S_e$ of the tangent space $T$ to $\NL(\gamma)$ at $Y$  is contained in $I_e$.
\end{lemma}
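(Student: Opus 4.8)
The plan is to reduce everything to a short computation in the Jacobian ring $S/J$, using the standard infinitesimal description of the Hodge locus. Write $\sigma:=(k+1)e-2k-2$; this is the socle degree of the Artinian Gorenstein algebra $S/I$ attached to $(Y,\gamma)$ in Definition~\ref{defGorensteinHL}. Recall also that $S/J$ is a graded Artinian Gorenstein algebra of socle degree $2\sigma$ (it is a complete intersection of $2k+2$ forms of degree $e-1$, since $Y$ is smooth), that under the identifications of the Proposition above $H^{k,k}(Y)_{\prim}\cong(S/J)_{\sigma}$ and $H^{k-1,k+1}(Y)_{\prim}\cong(S/J)_{\sigma+e}$, and that the cup product $H^{k,k}(Y)_{\prim}\times H^{k,k}(Y)_{\prim}\to H^{2k,2k}(Y)$ is, up to a nonzero scalar, the multiplication $(S/J)_{\sigma}\times(S/J)_{\sigma}\to(S/J)_{2\sigma}$. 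I will write $\bar\gamma\in(S/J)_\sigma$ for the class of $\gamma$, and $\bar G$ for the image in $S/J$ of a form $G$.

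I would first record the two ingredients. On the one hand, the standard description of the tangent space to $\NL(\gamma)$ (see \cite{Voi2}, built on Griffiths transversality and the residue calculus of \cite{GriRat}, \cite{CarGri}; here one uses $e>2$) shows that the lift of $T$ to $S_e$ is contained in
\[ \widetilde T:=\{\,G\in S_e\ \mid\ \bar\gamma\cdot\bar G=0\ \text{in}\ (S/J)_{\sigma+e}\,\}. \]
On the other hand, by Lemma~\ref{lemRecover} the degree-$\sigma$ part $I_\sigma$ is exactly $W$, the preimage in $S_\sigma$ of $\gamma^{\perp}\subset H^{k,k}(Y)_{\prim}$, so by the cup-product identification
\[ W=\{\,H\in S_\sigma\ \mid\ \bar\gamma\cdot\bar H=0\ \text{in}\ (S/J)_{2\sigma}\,\}, \]
and, again by Lemma~\ref{lemRecover}, $I_e=\{\,G\in S_e\mid G\cdot S_{\sigma-e}\subseteq W\,\}$ when $e\le\sigma$, while $I_e=S_e$ when $e>\sigma$.

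Given this, the inclusion $T\subseteq I_e$ is purely formal. If $e>\sigma$ there is nothing to prove. If $e\le\sigma$, take $G\in\widetilde T$ and $H\in S_{\sigma-e}$; then in $S/J$,
\[ \bar\gamma\cdot\overline{GH}=\bar H\cdot(\bar\gamma\cdot\bar G)=\bar H\cdot 0=0\qquad\text{in}\ (S/J)_{2\sigma}, \]
since $\bar\gamma\cdot\bar G$ already vanishes in $(S/J)_{\sigma+e}$. Hence $GH\in W$ for every $H\in S_{\sigma-e}$, i.e. $G\in I_e$; combined with $T\subseteq\widetilde T$ this gives $T\subseteq I_e$.

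The only genuinely non-formal input is the first ingredient: translating ``$\gamma$ remains of type $(k,k)$ to first order in the direction $G$'' into the vanishing of $\bar\gamma\cdot\bar G$ in $(S/J)_{\sigma+e}$. This is where the variation of Hodge structure and Griffiths' residues enter, and where one must be careful to land in the correct cohomological degree $\sigma+e$ (so that the target is $(S/J)_{\sigma+e}$); everything else is bookkeeping in the Gorenstein ring $S/J$. Note that this inclusion makes no use of the complete intersection structure of $Z$ — the forms $P_i,Q_i$ enter only in the companion statement identifying $I$ with $(P_1,\dots,P_{k+1},Q_1,\dots,Q_{k+1})$. (In fact, perfectness of the pairing $(S/J)_{\sigma+e}\times(S/J)_{\sigma-e}\to(S/J)_{2\sigma}$ turns $\widetilde T\subseteq I_e$ into an equality $\widetilde T=I_e$, so $T\subseteq\widetilde T=I_e$; but only the stated inclusion is needed in what follows.)
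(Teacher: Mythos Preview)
Your proof is correct and follows essentially the same route as the paper's. Both arguments use the standard infinitesimal description of the tangent space to $\NL(\gamma)$ (vanishing of $\bar\gamma\cdot\bar G$ in $(S/J)_{\sigma+e}$), then multiply by $S_{\sigma-e}$ to land in the socle degree and invoke the Gorenstein pairing on $S/I$ to conclude $G\in I_e$; you are simply more explicit in naming the intermediate object $\widetilde T$, in handling the boundary case $e>\sigma$, and in observing that in fact $\widetilde T=I_e$.
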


\begin{proof}
Fix a tangent vector $\xi \in T$. Then $\xi$ can be lifted to an element $G$ of $S_e$. Similarly $\gamma$ can be lifted to an element $H$ of $S_{(k+1)e-2k-2}$. Since $\xi$ is a tangent vector to the Hodge locus of $\gamma$ we find that 
 for every  $t\neq k$ the $(t,2k-t)$-part of $\xi \cup \gamma$ is zero. In particular, taking $t=k$ and using the identification with the Jacobian ring, we find that the product $GH$ equals $0$ in $(S/J)_{(k+2)e-2k-2}$. This implies that $(GH) S_{ke-2k-2}=0$ in $S/J$ and hence $GS_{ke-2k-2}\subset I_{(k+1)e-2k-2}$. Recall that $S/I$ is Artinian Gorenstein of socle degree $(k+1)e-2k-2$. Hence the existence of the non-degenerate pairing forces the class of $G$ to be zero in $S/I$, i.e, $G\in I_e$. 
 \end{proof}

\begin{lemma} We have that
\[ I =(P_1,\dots,P_{k+1},Q_1,\dots,Q_{k+1})\]
and $T=I_e$.
\end{lemma}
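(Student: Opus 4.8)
The plan is to play two inclusions of ideals against the Gorenstein property, using the dimension count already established in Proposition~\ref{propHSdim} together with the previous lemma. Write $I'' = (P_1,\dots,P_{k+1},Q_1,\dots,Q_{k+1})$. By Example~\ref{ExaGorensteinDefPol}, $S/I''$ is an Artinian Gorenstein algebra of socle degree $(k+1)e-2k-2$, which is exactly the socle degree of $S/I$. So both $S/I$ and $S/I''$ are Artinian Gorenstein of the same socle degree, and by Remark~\ref{rmkQuo} the only Gorenstein quotient of either with that socle degree is the algebra itself; hence it suffices to prove one containment $I \subseteq I''$ or $I'' \subseteq I$ in every degree, after which equality follows.

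First I would establish $I'' \subseteq I$. In degree $e$ the previous lemma gives $T \subseteq I_e$; separately, every deformation of $Y$ along the flag Hilbert scheme $H(d_1,\dots,d_{k+1};e)$ preserves the cycle class $[Z]$ and hence its Hodge type, so the Zariski tangent space at $Y$ to the image $L(d_1,\dots,d_{k+1};e)$ is contained in $T$. The tangent space to $L$ at $Y$ is computed in the proof of Proposition~\ref{propHSdim}: it is exactly $I''_e$ (the image of the normal-bundle map, whose cokernel has the codimension $h_I(e)$ computed there). This already shows $I''_e \subseteq T \subseteq I_e$. To promote this to all degrees, I would use that $I$ is generated in degree $\le e$ — more precisely, $S/I$ is Artinian with socle degree $(k+1)e-2k-2$, and the construction in Lemma~\ref{lemRecover} realizes $I$ as the annihilator-type ideal built from $W = I_{(k+1)e-2k-2}$; since $I''$ is the saturated-in-the-Artinian-sense ideal generated in degrees $\le e$ with $I''_e \subseteq I_e$, and $S/I''$ is itself Gorenstein of the same socle degree, the containment $I''_e\subseteq I_e$ forces $I'' \subseteq I$ by the minimality statement in the proof of Lemma~\ref{lemRecover} (any Gorenstein ideal $J$ of socle degree $m$ with $J_e\subseteq I_e$ and generated from its relevant part satisfies $J\subseteq I$).

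Then equality $I = I''$ follows from Remark~\ref{rmkQuo}: $S/I$ is a quotient of $S/I''$, both Gorenstein of socle degree $(k+1)e-2k-2$, so $S/I = S/I''$, i.e. $I = I''$. Finally, to get $T = I_e$ (not merely $I''_e \subseteq T \subseteq I_e$), I would compare dimensions. By Proposition~\ref{propHSdim} the codimension of $L(d_1,\dots,d_{k+1};e)$ in $\mathbf{P}(S_e)$ is $h_I(e) = \dim(S/I)_e = \operatorname{codim}(I_e, S_e)$, so $\dim L = \dim \mathbf{P}(I_e)$; since $\mathbf{P}(I_e) = \mathbf{P}(I''_e)$ is contained in $\operatorname{Zariski}$-tangent cone $\mathbf{P}(T)$ which is contained in $\mathbf{P}(I_e)$ (the last inclusion being the previous lemma), all three have the same dimension, forcing $T = I_e$, and simultaneously forcing $\NL(\gamma)$ to be smooth at $Y$ of dimension $\dim L$.

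The main obstacle I anticipate is the passage from the degree-$e$ equality $I''_e \subseteq I_e$ to the equality of the full ideals: one must make sure that no information is lost in the other graded pieces. This is where the Gorenstein rigidity (Remark~\ref{rmkQuo}) and the uniqueness in Lemma~\ref{lemRecover} do the real work — the point being that an Artinian Gorenstein ideal is completely determined by its socle-degree piece $W$, and one must check that the $W$ attached to $\gamma$ coincides with the $W'' = (I'')_{(k+1)e-2k-2}$, equivalently that $\gamma^\perp$ in the Jacobian ring pulls back to the same codimension-$1$ subspace that $I''$ cuts out. This identification of $\gamma^{\perp}$ with the relevant graded piece of $(P_1,\dots,Q_{k+1})$ — essentially a Macaulay-inverse-system computation matching the cycle class $[Z]_{\prim}$ to the socle generator of $S/I''$ — is the crux and should be isolated as the one genuinely computational step.
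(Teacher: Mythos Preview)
Your overall skeleton is the paper's: sandwich $I''_e \subseteq T \subseteq I_e$, promote to an inclusion of ideals, invoke Remark~\ref{rmkQuo} to get $I=I''$, and read off $T=I_e$ from the sandwich. Two points where you diverge from the paper deserve comment.

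First, to get $I''_e\subseteq T$ you go through the tangent space of $L(d_1,\dots,d_{k+1};e)$ and cite Proposition~\ref{propHSdim} for the identification $T_YL=I''_e$. That proposition only computes $\dim L$, not the tangent space, and $L$ can be singular at $Y$; so the equality is unjustified. You only need the easy inclusion $I''_e\subseteq T$, and the paper obtains it more directly: the linear space $I(Z)_e$ sits inside $\NL(\gamma)$ itself (every hypersurface through $Z$ carries the class $[Z]$), hence inside $T$; and then the residual trick of Lemma~\ref{lemRedDeg} says $[Z']_{\prim}=-[Z]_{\prim}$ for $Z'=V(P_1,\dots,Q_i,\dots,P_{k+1})$, so the \emph{same} ideal $I$ is attached to every residual $Z'$, whence each $Q_i\in I$ by the identical argument. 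This is the key move you are missing, and it entirely replaces the ``Macaulay-inverse-system computation'' you flag as the crux: no direct identification of $\gamma^{\perp}$ with $(I'')_{(k+1)e-2k-2}$ is needed.

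Second, your promotion step and your final dimension count are both more complicated than necessary. Once you have $I''_e\subseteq I_e$, the fact that $I''$ is generated in degrees strictly below $e$ lets you multiply up to the socle degree and conclude $I''_m\subseteq I_m$ (this is the content of ``Gorenstein $+$ generated in degree $<e$'' in the paper's argument); then Remark~\ref{rmkQuo} gives $I=I''$. At that point $I''_e=I_e$ and the sandwich $I''_e\subseteq T\subseteq I_e$ already forces $T=I_e$; your appeal to Proposition~\ref{propHSdim} for a separate dimension comparison is unnecessary here (and in the paper's architecture that comparison is deferred to the proof of the main theorem).
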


\begin{proof}
Since $Z$ is contained in $Y$ and $\gamma=[Z]_{\prim}$, it is obvious that $I(Z)_e$ is contained in the Zariski closure of the analytic scheme $\NL(\gamma)$. Since $I(Z)_e$ is a vector space  it is also contained in the tangent space at $Y$. Hence $I(Z)_e \subset T\subset I_e$. From the Gorenstein property and the fact that $I(Z)$ is generated in degree less then $e$  it follows that $I(Z)\subset I$.

Recall that $Z$ is given by  $P_1=\dots=P_{k+1}=0$ and $F=\sum P_iQ_i$. Suppose  that $Z'$ is obtained from $Z$ by replacing one of the $P_i$ with  $Q_i$ then the corresponding classes in $H^{2k}(Y,\mathbf{Q})_{\prim}$ differ by a sign. The ideal $I$ depends only on the subspace generated by $\gamma$, hence the ideal associated with $Z$ and with $Z'$ coincide. In particular $Q_i \in I$ for all $i$,
\[ I'=(P_1,\dots,P_{k+1},Q_1,\dots,Q_{k+1}) \subset I\]
and $I'_e\subset T$.
Moreover, as remarked above, we have that $S/I'$ is Artinian Gorenstein of socle degree $(k+1)e-2k-2$. 
Now by construction $S/I$ is Artinian Gorenstein of socle degree $(k+1)e-2k-2$. 

Hence $S/I$ is a quotient of $S/I'$ and both are Artinian Gorenstein rings with the same socle degree. It follows that $I=I'$ by Remark~\ref{rmkQuo}.
In particular,  $I'_e\subset T\subset I_e=I'_e$.
\end{proof}

Combining the previous lemmata we obtain
\begin{proposition} \label{propHLdim} Suppose $Y\subset \mathbf{P}^{2k+1}$ is a smooth hypersurface of degree $e$, containing a complete intersection $Z$ of multidegree $(d_1,\dots,d_{k+1})$, with $1\leq d_i\leq e-1$ for $i=1,\dots,k+1$. Let $I$ be any complete intersection ideal of multidegree $(d_1,\dots,d_{k+1},e-d_{k+1},\dots,e-d_1)$. Then the codimension in $S_e$ of the tangent space to $\NL([Z]_{\prim})$ at $Y$ equals $h_I(e)$.
\end{proposition}

We can now prove our main result:

\begin{theorem}
Suppose $Y\subset \mathbf{P}^{2k+1}$ is a smooth hypersurface of degree $e$, containing a complete intersection $Z$ of multidegree $(d_1,\dots,d_{k+1})$, with $1\leq d_i\leq e-1$ for $i=1,\dots,k+1$. 
 Then  the irreducible component of the Hodge locus containing $\NL([Z]_{\prim})$ coincides with the locus $L(d_1,\dots,d_{k+1};e)$ of smooth hypersurfaces of degree $e$ containing a complete intersection of multidegree $(d_1,\dots,d_{k+1})$.
 
 Moreover, $\NL([Z]_{\prim})$ is smooth at $Y$.
\end{theorem}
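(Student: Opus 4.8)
The plan is to derive the theorem essentially formally from Propositions~\ref{propHSdim} and~\ref{propHLdim}, whose combined substance is a numerical coincidence: for a complete intersection ideal $I$ of multidegree $(d_1,\dots,d_{k+1},e-d_{k+1},\dots,e-d_1)$, the codimension of $L:=L(d_1,\dots,d_{k+1};e)$ in $\mathbf{P}(S_e)$ and the codimension in $S_e$ of the lift of the tangent space to $\NL([Z]_{\prim})$ at $Y$ both equal $h_I(e)$. First I would reduce, via Lemma~\ref{lemRedDeg}, to the case $d_i\le e/2$, since replacing a factor by its residual changes neither $\NL([Z]_{\prim})$ nor the locus $L$. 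Set $d:=\dim\mathbf{P}(S_e)-h_I(e)$.

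The next step is to record the basic geometry of $L$. By the analysis of Section~\ref{secHS}, the flag Hilbert scheme $H(d_1,\dots,d_{k+1};e)$ is proper over $H(d_1,\dots,d_{k+1})$, which is irreducible by Lemma~\ref{lemHilbSchB}, with irreducible fibres $\mathbf{P}(I_e(Z))$ of dimension independent of $Z$; hence it is irreducible, and so is its image $L\subset\mathbf{P}(S_e)$, which has dimension $d$ by Proposition~\ref{propHSdim}. Since $Z\subset Y$ we have $[F]\in L$. Moreover every point of $L$ is a smooth hypersurface carrying a complete intersection cycle of multidegree $(d_1,\dots,d_{k+1})$; near $[F]$ such a cycle, together with its ambient hypersurface, is a deformation of $(Y,Z)$, so its class is the flat transport of $[Z]_{\prim}$ and remains of type $(k,k)$. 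Thus $L\cap B\subseteq\NL([Z]_{\prim})$ on a simply connected neighbourhood $B$ of $[F]$.

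The key step is a dimension comparison at $[F]$. The inclusion $L\cap B\subseteq\NL([Z]_{\prim})$ of reduced analytic germs gives $T_{[F]}L\subseteq T_{[F]}\NL([Z]_{\prim})$. By Proposition~\ref{propHLdim} the lift to $S_e$ of the right-hand side has codimension $h_I(e)$, and it contains $F=\sum P_iQ_i$; hence, after projectivising, $T_{[F]}\NL([Z]_{\prim})$ has dimension exactly $d$ in $\mathbf{P}(S_e)$. Since $L$ is a variety of dimension $d$ we also have $\dim T_{[F]}L\ge d$, so all of these quantities agree: $L$ is smooth at $[F]$ and $T_{[F]}L=T_{[F]}\NL([Z]_{\prim})$. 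Now $\NL([Z]_{\prim})$ has a $d$-dimensional Zariski tangent space at $[F]$ and contains the $d$-dimensional variety $L$ near $[F]$, so $\dim_{[F]}\NL([Z]_{\prim})=d=\dim T_{[F]}\NL([Z]_{\prim})$ and $\NL([Z]_{\prim})$ is regular, hence smooth, at $[F]$. Being smooth it is reduced and irreducible of dimension $d$ near $[F]$, and it contains the irreducible variety $L$ of the same dimension there, so the two coincide in a neighbourhood of $[F]$.

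It remains to identify the component: $L$ is an irreducible projective variety contained in the Hodge locus and coinciding near $[F]$ with the smooth $d$-dimensional germ $\NL([Z]_{\prim})$, whence the irreducible component of the Hodge locus containing $\NL([Z]_{\prim})$ is $L(d_1,\dots,d_{k+1};e)$, and that component is smooth of dimension $d$ at $Y$. I expect the genuine difficulty of the theorem to lie entirely upstream, in Propositions~\ref{propHSdim} and~\ref{propHLdim}, and in particular in the combinatorial identity of Lemma~\ref{lemHilbDiff} that forces the two codimensions to coincide; the deduction above is soft, the only points deserving care being the bookkeeping between $S_e$ and its projectivisation, the passage from the set-theoretic inclusion $L\cap B\subseteq\NL([Z]_{\prim})$ to the corresponding inclusion of Zariski tangent spaces, and the verification that no strictly larger irreducible subvariety of the Hodge locus passes through $[F]$, which the tangent space estimate is designed to preclude.
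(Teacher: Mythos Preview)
Your overall strategy is the paper's: reduce to $d_i\le e/2$, note that $L=L(d_1,\dots,d_{k+1};e)$ is irreducible of the dimension $d$ computed in Proposition~\ref{propHSdim}, use Proposition~\ref{propHLdim} to see that $\dim T_{[F]}\NL(\gamma)=d$ as well, and squeeze. The deduction you outline is also what the paper calls the soft part.

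There is, however, a genuine slip in one step. You assert $L\cap B\subseteq \NL([Z]_{\prim})$ and from this derive $T_{[F]}L\subseteq T_{[F]}\NL([Z]_{\prim})$, concluding that $L$ is smooth at $[F]$. This can fail. If $Y$ contains two complete intersections $Z,Z'$ of the same multidegree with $[Z]_{\prim}\neq[Z']_{\prim}$, then points of $L$ near $[F]$ that come from deforming the pair $(Y,Z')$ need not lie in $\NL([Z]_{\prim})$; only the image under $\pi$ of a neighbourhood of $(Y,Z)$ in the flag Hilbert scheme is guaranteed to land in $\NL([Z]_{\prim})$. The paper is explicit about this: it only claims that \emph{one analytic component} of $L\cap U$ lies in $\NL(\gamma)$, and in the Remark following the theorem it observes that $L$ can in fact be singular at $[F]$ in exactly this situation. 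So your assertion that $L$ is smooth at $[F]$ is not in general true, and the tangent-space inclusion you use is not available.

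The fix is minor and brings you back to the paper's chain of inequalities
\[
\dim_{[F]}\NL(\gamma)\;\le\;\dim T_{[F]}\NL(\gamma)\;=\;\dim L\;\le\;\dim_{[F]}\NL(\gamma),
\]
where the last inequality uses only that \emph{some} local branch of $L$ (namely the image of a neighbourhood of $(Y,Z)$) sits inside $\NL(\gamma)$; since $L$ is irreducible of pure dimension $d$, every local branch has dimension $d$. From equality throughout you get smoothness of $\NL(\gamma)$ at $[F]$; irreducibility of $L$ then gives $L\subseteq\overline{\NL(\gamma)}$ (the branch is Zariski-dense in $L$), and equality follows as both are irreducible of the same dimension. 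Drop the claim that $L$ itself is smooth at $[F]$.
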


\begin{proof}
Without loss of generality we may assume that each of the $d_i$ is at most $e/2$. 

Let $\gamma=[Z]_{\prim}$.
The locus $L(d_1,\dots,d_{k+1};e)$ is clearly irreducible. 
We have now the following series of inequalities:
\[ \dim_Y \NL(\gamma) \leq  \dim T_Y \NL(\gamma) = \dim L(d_1,\dots,d_k;e)\leq \dim_Y \NL(\gamma).\]
The first inequality is obvious, the equality follows from Proposition~\ref{propHSdim} and Proposition~\ref{propHLdim}. Let $U$ be a small neighborhood (in the analytic topology) of $[X]$ then one of the irreducible components of $L(d_1,\dots,d_{k+1};e)\cap U$ is contained in $\NL(\gamma)\cap U$. This yields the third inequality.

Since $\NL(\gamma)$  is connected it is then also irreducible. Hence $L(d_1,\dots,d_{k+1};e)$ and the Zariski closure $\overline{\NL(\gamma)}$ are irreducible varieties of the same dimension and the former is a closed subset of the latter, hence they coincide. 
\end{proof}

\begin{remark} The locus $\NL(\gamma)$ is only defined in a small analytic neighborhood of $Y$ and is smooth. However if $Y$ contains two or more complete intersections of the same multidegree, yielding different classes in $H^n(Y,\mathbf{Z})_{\prim}$ then the locus $L(d_1,\dots,d_{k+1},e)$ can be singular at $Y$.
\end{remark}

\bibliographystyle{plain}
\bibliography{remke2}
\end{document}